\documentclass[a4paper,11pt]{amsart}
\usepackage{amsfonts,amsthm,amsmath,amssymb,graphicx}
\theoremstyle{plain}
\newtheorem{lem}{Lemma}[section]
\newtheorem{prop}[lem]{Proposition}
\newtheorem{thm}[lem]{Theorem}
\newtheorem{cor}[lem]{Corollary}
\theoremstyle{definition}

\theoremstyle{remark}

\DeclareMathOperator{\sgn}{sgn}
\DeclareMathOperator{\cls}{cls}
\DeclareMathOperator{\rank}{rank}

\DeclareMathOperator{\mass}{mass}
\DeclareMathOperator{\ord}{ord}
\DeclareMathOperator{\sym}{sym}

\DeclareMathOperator{\e}{e}

\DeclareMathOperator{\diag}{diag}
\DeclareMathOperator{\gen}{gen}

\newcommand{\Z}{\mathbb Z}
\newcommand{\Q}{\mathbb Q}
\newcommand{\A}{\mathbb A}

\newcommand{\E}{\mathbb E}
\newcommand{\R}{\mathbb R}
\newcommand{\C}{\mathbb C}
\newcommand{\stufe}{\mathcal N }
\newcommand{\G}{\mathcal G}

\newcommand{\h}{\mathfrak H}

\newcommand{\M}{\mathcal M}
\newcommand{\Y}{\mathcal Y}

\newcommand{\calL}{\mathcal L}

\begin{document}

\title[Average Siegel theta series as Eisenstein series]
{Explicitly realizing average Siegel theta series as linear combinations of Eisenstein series}

\author{Lynne H. Walling}
\address{School of Mathematics, University of Bristol, University Walk, Clifton, Bristol BS8 1TW, United Kingdom;
phone +44 (0)117 331-5245, fax +44 (0)117 928-7978}
\email{l.walling@bristol.ac.uk}

\keywords{theta series, quadratic forms, Eisenstein series, Siegel modular forms}

\begin{abstract} 
We find nice representatives for the 0-dimensional cusps of the degree $n$ Siegel upper half-space under the action of $\Gamma_0(\stufe)$.  To each of these we attach a Siegel Eisenstein series, and then
we make explicit a result of Siegel, realizing any integral weight average Siegel theta series of arbitrary level $\stufe$ and Dirichlet character $\chi_L$ modulo $\stufe$ as a linear combination of Siegel Eisenstein series.
\end{abstract}

\maketitle
\def\thefootnote{}
\footnote{2010 {\it Mathematics Subject Classification}: Primary
11F46, 11F11 }
\def\thefootnote{\arabic{footnote}}

\section{Introduction}

In the 1930's Siegel introduced generalized theta series to study quadratic forms and their representation numbers.  Given an $m\times m$ symmetric matrix $Q$ for a 
positive definite quadratic form on a $\Z$-lattice $L$, and given an $n\times n$ symmetric matrix $T$ for a positive semi-definite quadratic form, the $T$th Fourier coefficient of the degree $n$ Siegel theta series 
$\theta^{(n)}(L;\tau)$
tells us the number of dimension $n$ sublattices of $L$ on which the quadratic form $Q$ restricts to $T$.  
Siegel showed that $\theta^{(n)}(L;\tau)$ is a degree $n$, weight $m/2$ Siegel modular form of some level $\stufe$ and character $\chi_L$ modulo $\stufe$.
Further, in \cite{S35}, Siegel showed that upon averaging the theta series over the genus of $L$, the resulting average theta series $\theta^{(n)}(\gen L;\tau)$ is a linear combination of Siegel Eisenstein series, and the coefficients in this linear combination are generalized Gauss sums.

Here we consider the case when  $m=2k$ ($k\in\Z_+$) and $n<k-1$
(the condition $n<k-1$ is to ensure the convergence of the Siegel Eisenstein series we define). 
The elements of $\Gamma_{\infty}\backslash Sp_n(\Z)/\Gamma_0(\stufe)$ are sometimes called the 0-dimensional cusps of the degree $n$ Siegel upper half-space 
under the action of $\Gamma_0(\stufe),$
and
for each $\Gamma_{\infty}\gamma\Gamma_0(\stufe)\in\Gamma_{\infty}\backslash Sp_n(\Z)/\Gamma_0(\stufe)$, there is a 
degree $n$ Siegel Eisenstein series $\E_{\gamma}$ 
transforming under $\Gamma_0(\stufe)$ with weight $k$ and character $\chi$ modulo $\stufe$
(defined in Section 3).  Varying $\gamma$ to get a complete set of representatives, we know that those $\E_{\gamma}$ that are nonzero form a basis for the space of Siegel Eisenstein series.

The majority of effort in this paper is spent on finding nice representatives for the 0-dimensional cusps.
Writing $\gamma_{_M}$ for the matrix $\begin{pmatrix}I&0\\M&I\end{pmatrix}$, 
in Section 4 we define the meaning of $\gamma_{_M}$ being a reduced representative modulo an odd prime, modulo 2, and modulo 4; we also define the meaning of $\gamma_{_M}$ being a partially reduced representative modulo $2^{e'}$ where $e'\ge 3$.
When $n=1$ or $8\nmid\stufe$, we find a complete set of representatives
$\{\gamma_{_M}\}$ for 
$\Gamma_{\infty}\backslash Sp_n(\Z)/\Gamma_0(\stufe)$
so that each $\gamma_{_M}$ is reduced modulo $\stufe$;
when $n>1$ and $8|\stufe$, we find a set $\{\gamma_{_M}\}$ that contains a complete set of representatives 
so that with $e'=\ord_2(\stufe)$,
each $\gamma_{_M}$ is reduced  modulo $\stufe/2^{e'}$ and partially reduced modulo $2^{e'}$
 (see Propositions 4.2 and 4.3).
Further, given $\gamma_{_M}$ 
so that $\gamma_{_M}$ is reduced modulo $\stufe/2^{e'}$ and partially reduced modulo $2^{e'}$,
$M$ is diagonal modulo $q^{\ord_q(\stufe)}$ for $q$ an odd prime dividing $\stufe$, and $M$ is an orthogonal sum of unary and binary blocks modulo $2^{e'}$.  Using these representatives and the local structure of the lattice $L$ at each prime dividing $\stufe$,
 it is fairly straightforward (and amusing) to evaluate the generalized Gauss sums that  give us $\theta^{(n)}(\gen L;\tau)$ as a linear combination of the Siegel Eisenstein series corresponding to these representatives $\gamma_{_M}$.
Consequently we prove the following.

\begin{thm}  Let $L$ be a rank $2k$ $\Z$-lattice ($k\in\Z_+$), and let $Q$ be a $2k\times 2k$ integral symmetric matrix defining a positive definite quadratic form on $L$ so that $Q(x)\in 2\Z$ for any $x\in L$.  Let $\stufe$ be the level of $Q$, and set $e'=\ord_2(\stufe)$.
Let $\{\gamma_{_M}\}$ be  a complete set of representatives for
$\Gamma_{\infty}\backslash Sp_n(\Z)/\Gamma_0(\stufe)$ so that when $e'\le 2$, each $\gamma_{_M}$ is reduced modulo $\stufe$, and when $e'\ge 3$ each $\gamma_{_M}$ is reduced modulo $\stufe/2^{e'}$ and partially reduced modulo $2^{e'}$.  Then 
for $n\in\Z_+$ with $n<k-1$, we have
$$\theta^{(n)}(\gen L;\tau)=\kappa\sum_M a(L,M) \E_{\gamma_{_M}}$$
where $\kappa=1$ if $\stufe>2$ and $\frac{1}{2}$ otherwise, and
$a(L,M)=\prod_{q|\stufe} a_q(L,M)$ ($q$ prime) with $a_q(L,M)$ defined as follows.
For a prime $q|\stufe$ with $q^e\parallel\stufe$, we take $G\in SL_{2k}(\Z_q)$ so that
$$^tGQG\equiv J_0\perp qJ_1\perp\cdots\perp q^eJ_e\ (q^{e+2})$$
with each $J_c$ of size $r_c\times r_c$ (some $r_c\ge0$) and $J_c$ invertible modulo $q$ when $r_c>0$; we also have
$$M\equiv M_0\perp qM_1\perp\cdots \perp q^eM_e\ (q^e)$$
with each $M_j$ of size $d_j\times d_j$ (some $d_j$) and $M_j$ invertible modulo $q$ when $d_j>0$.  Then
$$a_q(L,M)=\prod_{c=1}^e\prod_{j=0}^{c-1} q^{(j-c)r_cd_j/2}
\cdot\begin{cases} 1&\text{if $2|c-j$,}\\ 
q^{-r_cd_j/2}\G_{J_c,M_j}(q)&\text{otherwise.}\end{cases}$$
For $q$ odd,
$$\G_{J_c,M_j}(q)=\left(\frac{\det J_c}{q}\right)^{d_j}\left(\frac{\det M_j}{q}\right)^{r_c}
(\G_1(q))^{r_cd_j}$$
where $\G_1(q)$ is the classical Gauss sum; for $q=2$, $\G_{J_c,M_j}(2)$ is similar (but there are several cases), and the value of this quantity
is given explicitly in Proposition 5.5.  
\end{thm}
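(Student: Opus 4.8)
The plan is to invoke Siegel's theorem from \cite{S35}: the average theta series $\theta^{(n)}(\gen L;\tau)$ is a linear combination $\sum_\gamma c_\gamma(L)\,\E_\gamma$ over the $0$-dimensional cusps $\Gamma_\infty\backslash Sp_n(\Z)/\Gamma_0(\stufe)$, each coefficient $c_\gamma(L)$ being a generalized Gauss sum determined by $Q$ modulo $\stufe$ and the chosen representative $\gamma$. First I would take for the $\gamma$'s the explicit representatives $\gamma_{_M}$ produced by Propositions~4.2 and~4.3 (extracting a complete set if the construction modulo $2^{e'}$ overshoots; a surplus representative does no harm, its Eisenstein series being zero). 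It then remains to show $c_{\gamma_{_M}}(L)=\kappa\,a(L,M)$.

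Since $c_{\gamma_{_M}}(L)$ depends only on $Q$ and $M$ modulo $\stufe$, the next step is a Chinese Remainder Theorem factorization: for these representatives $M$ is block diagonal modulo each prime-power component of $\stufe$ and $Q$ admits a Jordan decomposition modulo each such component, so the defining exponential sum breaks as a product $\prod_{q\mid\stufe}a_q(L,M)$ of local sums, up to one global scalar. That scalar is $\kappa$; one pins it down by comparing the two sides at the standard cusp (the left-hand side has constant term $1$, and among the $\E_{\gamma_{_M}}$ only the one with $M=0$ then contributes), obtaining $\kappa=1$ for $\stufe>2$ and $\kappa=\tfrac12$ otherwise, the extra factor $2$ reflecting that $-I$ is indistinguishable from $I$ modulo $\stufe$ exactly in the cases $\stufe\le 2$.

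The core of the argument is the local computation. Fix an odd prime $q$ with $q^e\parallel\stufe$, pick $G\in SL_{2k}(\Z_q)$ realizing ${}^tGQG\equiv J_0\perp qJ_1\perp\cdots\perp q^eJ_e\ (q^{e+2})$, and use $M\equiv M_0\perp qM_1\perp\cdots\perp q^eM_e\ (q^e)$. Decomposing the matrix variable of the local Gauss sum conformally with the row blocks $r_0,\dots,r_e$ of $Q$ and the column blocks $d_0,\dots,d_e$ of $M$, it splits into elementary pieces indexed by pairs $(J_c,M_j)$; each piece is a quadratic Gauss sum over an $r_c\times d_j$ matrix, and tracking its modulus and normalization shows such a pair contributes trivially unless $j<c$, in which case it contributes $q^{(j-c)r_cd_j/2}$ times $1$ when $c-j$ is even and times $q^{-r_cd_j/2}\,\G_{J_c,M_j}(q)$ when $c-j$ is odd — precisely the shape in the statement. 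Because $q$ is odd, $J_c$ and $M_j$ diagonalize over $\Z_q$, so $\G_{J_c,M_j}(q)$ becomes a product of $r_cd_j$ classical quadratic Gauss sums $\sum_{x\bmod q}\e(ux^2/q)=\left(\frac{u}{q}\right)\G_1(q)$; collecting the Legendre symbols gives $\left(\frac{\det J_c}{q}\right)^{d_j}\left(\frac{\det M_j}{q}\right)^{r_c}(\G_1(q))^{r_cd_j}$.

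It remains to handle $q=2$. Now the Jordan decomposition of $Q$ over $\Z_2$ is an orthogonal sum of unary and binary blocks (each scaled by a power of $2$), and by Proposition~4.3 the representative $M$ is likewise an orthogonal sum of unary and binary blocks modulo $2^{e'}$, so the local sum again factors over pairs of blocks; but each elementary factor is now a quadratic Gauss sum built from a pair of such unary or binary blocks over $\Z/2^a$, to be evaluated case-by-case according to the block types and the $2$-adic units involved — exactly the content of Proposition~5.5, which I would cite. Assembling the odd-prime factors, the $q=2$ factor, and the scalar $\kappa$ then gives the asserted identity. I expect this $q=2$ step to be the real obstacle: the representatives are only partially reduced modulo $2^{e'}$ when $8\mid\stufe$, and the binary blocks proliferate into many subcases, so the work lies in careful bookkeeping — but, as the introduction remarks, once the representatives of Section~4 are in hand no further idea is needed.
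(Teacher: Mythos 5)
Your proposal is correct and follows essentially the same route as the paper: Siegel's theorem supplies the linear combination, the coefficients are read off by evaluating both sides at the cusps $\gamma_{_M}^{-1}$ (the paper's Propositions 3.1 and 5.1, with genus-invariance of the local data making $\theta^{(n)}(\gen L)$ and $\theta^{(n)}(L)$ agree there), the Gauss sum factors over primes by CRT and over Jordan blocks, and the modulus reduction plus the elementary evaluations of Propositions 5.3--5.5 yield the stated formula, with $\kappa$ coming from the value $2$ versus $1$ of $\E_{\gamma_{_M}}$ at its own cusp. The one slip is your parenthetical claim that a surplus representative ``does no harm, its Eisenstein series being zero'': a redundant $\gamma_{_{M'}}$ lying in the same double coset as some $\gamma_{_M}$ gives $\E_{\gamma_{_{M'}}}$ proportional to $\E_{\gamma_{_M}}$, not zero, so it would double-count --- but this is moot since the theorem's hypotheses already provide a complete, irredundant set of representatives.
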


This theorem leaves the following questions unanswered:  how do we find a basis of Siegel Eisenstein series when $n\ge k-1$, and how do we find a complete set of representatives for $\Gamma_{\infty}\backslash Sp_n(\Z)/\Gamma_0(\stufe)$ when $n>1$ and $8|\stufe$?

The author thanks Bristol's Automorphics Anonymous, Wai Kiu Chan, and Jens Funke  for fun and helpful conversations.

\bigskip
\section{Preliminaries}

Let $L=\Z x_1\oplus\cdots\oplus\Z x_m$, a $\Z$-lattice of rank $m$, and let $Q$ be an $m\times m$ symmetric matrix with integral entries.  Thus $Q$ defines a quadratic form on $L$, via the rule that for $x=a_1x_1+\cdots+a_mx_m\in L$, we have
$$Q(x)=(a_1\ \cdots\ a_m)\,Q\ ^t(a_1\ \cdots\ a_m).$$
We assume that $Q$ defines a positive definite quadratic form on $L$, meaning that for $x\in L$, $Q(x)>0$ whenever $x\in L$ with $x\not=0$.  We also assume that $Q$ is even integral, meaning that $Q\in\Z^{n,n}_{\sym}$ with even diagonal entries
(here, for a ring $R$, $R^{n,n}_{\sym}$ denotes the set of $n\times n$ symmetric matrices with entries in $R$).  Thus for any $x\in L$, we have $Q(x)\in 2\Z$.
The level of $Q$ (also called the level of $L$) is the smallest positive integer $\stufe$ so that
$\stufe Q^{-1}$ is even integral.

For $n\in\Z_+$, we define the theta series $\theta^{(n)}(L;\tau)$ with variable
$$\tau\in\h_{(n)}=\{X+iY:\ X, Y\in\R^{n,n}_{\sym},\ Y>0\ \}$$
by setting
$$\theta^{(n)}(L;\tau)=\sum_{U\in\Z^{m,n}}\e\{Q(U)\tau\},$$
where  $Y>0$ means that $Y$ represents a positive definite quadratic form,
$\e\{*\}=\exp(\pi iTr(*))$, and $Q(U)=\,^tUQU$.
As mentioned earlier, $\theta^{(n)}(L;\tau)$ is a Siegel modular form of degree $n$, weight $m/2$, level $\stufe$ and quadratic character $\chi$ modulo $\stufe$, meaning that
with
$$Sp_n(\Z)=\left\{\begin{pmatrix}A&B\\C&D\end{pmatrix}: A\,^tB=B\,^tA,
\ C\,^tD=D\,^tC,\ A\,^tD-B\,^tC=I\ \right\}$$
and
$$\Gamma_0(\stufe)=
\left\{\begin{pmatrix}A&B\\C&D\end{pmatrix}\in Sp_n(\Z):
\ \stufe|C\ \right\},$$
for any $\gamma=\begin{pmatrix}A&B\\C&D\end{pmatrix}\in\Gamma_0(\stufe),$
we have
\begin{align*}
\theta^{(n)}(L;\tau)|\gamma
&:=\det(C\tau+D)^{-m/2}\,\theta^{(n)}(L;(A\tau+B)(C\tau+D)^{-1})\\
&=\chi_L(\det D)\,\theta^{(n)}(L;\tau).
\end{align*}
When $m$ is odd, we need to specify how we are taking square-roots; from hereon, we will assume that $m=2k$ with $k\in\Z$.  With this assumption, for $d\in\Z$ with $(d,\stufe)=1$, we have
$$\chi_L(d)=\left(\frac{(-1)^k\det Q}{|d|}\right)\,\sgn(d)^k.$$

Suppose that $L'$ is a rank $2k$ $\Z$-lattice with a positive definite quadratic form given by $Q'\in\Z^{n,n}_{\sym}$ (relative to some $\Z$-basis for $L'$).
With $L$ as above, we say that $L'$  is in the genus of $L$ if, for every prime $q$, there is some $G\in GL_{2k}(\Z_q)$ so that $^tGQ'G=Q$; here $\Z_q$ denotes the set of $q$-adic integers.
We say that $L'$ is in the same isometry class as $L$ if there is some $G\in GL_{2k}(\Z)$ so that
$^tGQ'G=Q$.
We define $o(L')$ to be the order of the orthogonal group of $L'$ (being all $G\in GL_{2k}(\Z)$ so that $^tGQ'G=Q'$), and we  set
$$\theta^{(n)}(\gen L;\tau)
=\frac{1}{\mass L}\sum_{\cls L'\in\gen L}\frac{1}{o(L')}\theta^{(n)}(L';\tau)$$
where
$$\mass L=\sum_{\cls L'\in\gen L}\frac{1}{o(L')}$$
(so the 0th Fourier coefficient of $\theta^{(n)}(\gen L;\tau)$ is 1, as is the 0th Fourier coefficient of $\theta^{(n)}(L;\tau)$).

Besides the subgroup $\Gamma_0(\stufe)$ of $Sp_n(\Z)$, we also define the subgroups
$$\Gamma(\stufe)=\{\gamma\in Sp_n(\Z):\ \gamma\equiv I\ (\stufe)\ \},$$
$$\Gamma_{\infty}=\left\{\begin{pmatrix}A&B\\0&D\end{pmatrix}\in Sp_n(\Z)\right\},$$
and
$$\Gamma_{\infty}^+=
\left\{\begin{pmatrix}A&B\\0&D\end{pmatrix}\in Sp_n(\Z):\ \det D=1\ \right\}.$$
For later convenience, we set
$G_{\pm}=\begin{pmatrix}I_{n-1}\\&-1\end{pmatrix}$ and 
$\gamma_{\pm}=\begin{pmatrix}G_{\pm}\\&G_{\pm}\end{pmatrix}.$

We repeatedly use that $Tr(AB)=Tr(BA)$ and hence $\e\{AB\}=\e\{BA\}$.  Also, with
$A,B$ denoting square matrices, we write $A\perp B$ to denote the block diagonal matrix $\diag\{A,B\}$, and for ring elements $a_1,\ldots,a_r$, we write
$\big<a_1,\ldots,a_r\big>$ to denote $\diag\{a_1,\ldots,a_r\}$.

\bigskip
\section{Siegel Eisenstein series}

In \cite{int wt}, we constructed Siegel Eisenstein series of degree $n$, weight $k\in\Z_+$, level $\stufe$ and character $\chi$ modulo $\stufe$, presuming we have $k>n+1$ (this constraint is for reasons of convergence).  Here we review this construction, making a few minor modifications to this construction, resulting in a slight modification to their normalizations; then we evaluate the Eisenstein series at the cusps.

We first define an Eisenstein series for $\Gamma(\stufe)$.
With $\delta^*$ chosen so that 
$\Gamma^+_{\infty}\Gamma(\stufe)=\cup_{\delta^*}\Gamma^+_{\infty}\delta^*
\text{ (disjoint)}$ and $\tau\in\h_{(n)}$,
we set 
$$\E^*(\tau)=\sum_{\delta^*}1(\tau)|\delta^*
\text{ where }1(\tau)|\begin{pmatrix}A&B\\C&D\end{pmatrix}=\det(C\tau+D)^{-k}.$$
Since $1(\tau)|\beta=1$ for $\beta\in\Gamma^+_{\infty}$, $\E^*$ is well-defined; further, it is analytic (in all variables of $\tau$).
Note that  for $\stufe\le 2$, we have $\gamma_{\pm}\in\Gamma(\stufe)\smallsetminus \Gamma_{\infty}^+$ and so $E^*=0$ unless $k$ is even.

Now take $\gamma\in Sp_n(\Z)$.  Set
$$\Gamma_{\gamma}=\{\beta\in\Gamma_0(\stufe):\ \Gamma_{\infty}\Gamma(\stufe)\gamma\beta=\Gamma_{\infty}\Gamma(\stufe)\gamma\ \},$$
and 
$$\Gamma^+_{\gamma}=\{\beta\in\Gamma_0(\stufe):\ \Gamma^+_{\infty}\Gamma(\stufe)\gamma\beta=\Gamma^+_{\infty}\Gamma(\stufe)\gamma\ \};$$
one easily checks that $[\Gamma_{\gamma}:\Gamma^+_{\gamma}]=1$ or 2.
Choose $\delta, \delta'$ so that 
$$\Gamma_0(\stufe)=\cup_{\delta}\Gamma^+_{\gamma}\delta \text{ (disjoint), }
\Gamma^+_{\gamma}=\cup_{\delta'}\Gamma(\stufe)\delta' \text{ (disjoint);}$$
using that $\Gamma(\stufe)$ is a normal subgroup of $Sp_n(\Z)$, we see that
$$\Gamma_{\infty}^+\gamma\Gamma_0(\stufe)
=\cup_{\delta',\delta}\Gamma_{\infty}^+\Gamma(\stufe)\gamma\delta'\delta.$$
We set
\begin{align*}
\E'_{\gamma}
&=\sum_{\delta,\delta'}\overline\chi(\delta\delta')\E^*|\gamma\delta'\delta;
\end{align*}
here $\chi(\delta)$ means $\chi(\det D_{\delta})$.
Note that for $\beta\in\Gamma^+_{\gamma}$, we have
$$\cup_{\delta^*}\Gamma^+_{\infty}\delta^*\gamma\beta
=\Gamma^+_{\infty}\Gamma(\stufe)\gamma\beta
=\Gamma^+_{\infty}\Gamma(\stufe)\gamma
=\cup_{\delta^*}\Gamma^+_{\infty}\delta^*\gamma,$$
and so $\E^*|\gamma\beta=\E^*|\gamma$;
hence $\E'_{\gamma}$ is well-defined.  Also,
for any $\alpha\in \Gamma_0(\stufe)$, $\delta\alpha$ varies over a set of coset representatives for $\Gamma_{\gamma}^+\backslash\Gamma_0(\stufe)$ as $\delta$ does, and so 
$\E'_{\gamma}|\alpha=\chi(\alpha)\E'_{\gamma}$.  Notice that
$$\E'_{\gamma}=
\left(\sum_{\delta'}\overline\chi(\delta')\right)
\sum_{\delta}\overline\chi(\delta)\E^*|\gamma\delta,$$
and thus $\E'_{\gamma}=0$ unless $\chi$ is trivial on $\Gamma^+_{\gamma}$.
Also notice that with $\gamma,\delta',\delta$ as above, we have
$$\Gamma_{\infty}\gamma\Gamma_0(\stufe)
=\cup_{\delta',\delta}\big(
\Gamma_{\infty}^+\Gamma(\stufe)\gamma\delta'\delta 
\cup \Gamma_{\infty}^+\Gamma(\stufe)\gamma_{\pm}\gamma\delta'\delta\big)$$
and $\E'_{\gamma_{\pm}\gamma}=(-1)^k\E'_{\gamma}$.

For $\gamma\in Sp_n(\Z)$, set
$$\E_{\gamma}=\frac{1}{[\Gamma_{\gamma}:\Gamma(\stufe)]}\E'_{\gamma}.$$
So when $\E_{\gamma}\not=0$ and
$\Gamma^+_{\gamma}=\Gamma_{\gamma}$, we have
$$\E_{\gamma}=
\sum_{\delta\in\Gamma_{\gamma}\backslash\Gamma_0(\stufe)}
\overline\chi(\delta)\E^*|\gamma\delta.$$
Now suppose that $\E_{\gamma}\not=0$ and  $\Gamma^+_{\gamma}\not=\Gamma_{\gamma}$; take $\beta'\in\Gamma_{\gamma}\smallsetminus\Gamma^+_{\infty}.$
Then
\begin{align*}
\E_{\gamma}
&=\frac{1}{2}
\sum_{\delta\in\Gamma_{\gamma}\backslash\Gamma_0(\stufe)}\overline\chi(\delta)
\E^*|\gamma\delta
+\frac{1}{2}
\sum_{\delta\in\Gamma_{\gamma}\backslash\Gamma_0(\stufe)}\overline\chi(\beta'\delta)
\E^*|\gamma\beta'\delta.
\end{align*}
By our choice of $\beta'$, we have $\gamma\beta'\gamma^{-1}\in\gamma_{\pm}\Gamma^+_{\infty}\Gamma(\stufe),$
so $\E^*|\gamma\beta'=\chi(-1)\E^*|\gamma.$
Hence
\begin{align*}
\E_{\gamma}=\frac{1}{2}\big(1+\overline\chi(\beta')\chi(-1)\big)
\sum_{\delta\in\Gamma_{\gamma}\backslash\Gamma_0(\stufe)}
\overline\chi(\delta)\E^*|\gamma\delta,
\end{align*}
and so $\E_{\gamma}=0$ unless $\chi(\beta')=\chi(-1)$.

Thus regardless of whether $\Gamma_{\gamma}^+=\Gamma_{\gamma}$, when $\E_{\gamma}\not=0$  we have
$$\E_{\gamma}=\sum_{\delta\in\Gamma_{\gamma}\backslash\Gamma_0(\stufe)}
\overline\chi(\delta)\E^*|\gamma\delta.$$

As discussed in \cite{int wt}, as $\Gamma_{\infty}\gamma\Gamma_0(\stufe)$ varies over $\Gamma_{\infty}\backslash Sp_n(\Z)/\Gamma_0(\stufe)$, the non-zero $\E_{\gamma}$ form a basis for the space of Siegel Eisenstein series of degree $n$, weight $k$, level $\stufe$, and character $\chi$.

Now we evaluate the non-zero $\E_{\gamma}$ at the cusps.

\begin{prop}  Suppose that $\alpha,\gamma\in Sp_n(\Z)$ so that $\E_{\gamma}\not=0$.
If $\alpha\not\in\Gamma_{\infty}\gamma\Gamma_0(\stufe)$, then
$$\lim_{\tau\to i\infty}\E_{\gamma}(\tau)|\alpha^{-1}=0.$$
If $\alpha=\beta\gamma\delta'$ for some $\beta\in\Gamma_{\infty}$ and $\delta'\in\Gamma_0(\stufe)$, then
\begin{align*}
\lim_{\tau\to i\infty}\E_{\gamma}(\tau)|\alpha^{-1}
&=\begin{cases}
2&\text{if $\stufe\le 2$,}\\
\overline\chi(\delta'\beta)&\text{otherwise.}\end{cases}
\end{align*}
\end{prop}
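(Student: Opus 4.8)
The plan is to compute the limit $\lim_{\tau\to i\infty}\E_\gamma(\tau)|\alpha^{-1}$ term by term, using the expansion $\E_\gamma=\sum_{\delta\in\Gamma_\gamma\backslash\Gamma_0(\stufe)}\overline\chi(\delta)\,\E^*|\gamma\delta$ together with the definition $\E^*=\sum_{\delta^*}1|\delta^*$, where $\delta^*$ runs over $\Gamma_\infty^+\backslash\Gamma_\infty^+\Gamma(\stufe)$. So altogether $\E_\gamma|\alpha^{-1}$ is a sum of terms $\overline\chi(\delta)\,(1|\delta^*\gamma\delta\alpha^{-1})(\tau)=\overline\chi(\delta)\det(C_{\delta^*\gamma\delta\alpha^{-1}}\tau+D_{\delta^*\gamma\delta\alpha^{-1}})^{-k}$. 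The key analytic fact I would invoke is the standard one for Siegel Eisenstein series: as $\tau\to i\infty$ (meaning $Y\to\infty$ through the cone), $\det(C\tau+D)^{-k}\to 0$ unless the bottom-left block $C$ of the relevant element of $Sp_n(\Z)$ is zero, i.e.\ unless that element lies in $\Gamma_\infty$; and when $C=0$ the term $\det(C\tau+D)^{-k}=\det(D)^{-k}=\pm 1$ (since $D\in GL_n(\Z)$ forces $\det D=\pm 1$, and the sign is accounted for by our fixed choice of square root / the character conventions of Section~2). This reduces everything to a combinatorial count of which pairs $(\delta^*,\delta)$ make $\delta^*\gamma\delta\alpha^{-1}\in\Gamma_\infty$.

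For the first assertion, suppose $\alpha\notin\Gamma_\infty\gamma\Gamma_0(\stufe)$. If some term survived, we would have $\delta^*\gamma\delta\alpha^{-1}\in\Gamma_\infty$ for some $\delta^*\in\Gamma_\infty^+\Gamma(\stufe)$ and $\delta\in\Gamma_0(\stufe)$, hence $\alpha\in\Gamma_\infty\,\delta^*\gamma\delta\subseteq\Gamma_\infty\Gamma(\stufe)\gamma\Gamma_0(\stufe)\subseteq\Gamma_\infty\gamma\Gamma_0(\stufe)$ (using $\Gamma(\stufe)\subseteq\Gamma_0(\stufe)$ and normality to move the $\Gamma(\stufe)$ factor to the right), contradicting the hypothesis. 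So every term tends to $0$, and since the sum is finite the limit is $0$.

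For the second assertion, write $\alpha=\beta\gamma\delta'$ with $\beta\in\Gamma_\infty$, $\delta'\in\Gamma_0(\stufe)$. Then $\delta^*\gamma\delta\alpha^{-1}=\delta^*\gamma\delta\,\delta'^{-1}\gamma^{-1}\beta^{-1}$, and I want to identify exactly the pairs $(\delta^*,\delta)$ for which this lies in $\Gamma_\infty$. I expect this to happen precisely when $\delta$ lies in the coset $\Gamma_\gamma\delta'$ (so $\gamma\delta\delta'^{-1}\gamma^{-1}$ fixes $\Gamma_\infty\Gamma(\stufe)\gamma$ on the appropriate side), and then $\delta^*$ ranges over an index set whose size is $[\Gamma_\gamma:\Gamma(\stufe)]=[\Gamma_\infty^+\Gamma(\stufe):\Gamma_\infty^+]$-many or half that, tracking carefully the $\Gamma_\infty$ vs.\ $\Gamma_\infty^+$ discrepancy — this is where the two cases $\stufe\le 2$ (where $\gamma_\pm\in\Gamma(\stufe)$, doubling the count, giving the value $2$) and $\stufe>2$ (giving a single surviving contribution with value $\overline\chi(\delta)=\overline\chi(\delta')$, and then the extra $\overline\chi(\beta)$ from $D_\beta$ entering $\det(C\tau+D)^{-k}\to\det D_\beta^{-k}$ and the character transformation law) come from. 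Concretely: when $\stufe>2$ one surviving term contributes $\overline\chi(\delta')\cdot\overline\chi(\beta)=\overline\chi(\delta'\beta)$; when $\stufe\le 2$, $\chi$ is trivial and the pairs $(\delta^*,\delta)$ giving $\Gamma_\infty$-elements are exactly twice as numerous (the $\pm$ ambiguity in $\delta^*$), yielding $2$.

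The main obstacle I anticipate is bookkeeping the $\Gamma_\infty$ versus $\Gamma_\infty^+$ subtlety cleanly: the Eisenstein series $\E^*$ is summed over $\Gamma_\infty^+\backslash\Gamma_\infty^+\Gamma(\stufe)$, but the limit criterion ``$C=0$'' only detects membership in the larger group $\Gamma_\infty$, so a single $\Gamma_\infty$-class of $\delta^*\gamma\delta\alpha^{-1}$ may correspond to one or two summands $\delta^*$ depending on whether $\gamma_\pm\gamma\Gamma_0(\stufe)$-type elements are already absorbed — and this is exactly the mechanism producing the factor $2$ when $\stufe\le 2$ (consistent with the earlier observations $\E^*=0$ unless $k$ even, and $\E'_{\gamma_\pm\gamma}=(-1)^k\E'_\gamma$). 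I would handle this by fixing, once and for all, representatives $\delta^*$ and splitting according to whether $\delta^*$ has $\det D=1$; the $\stufe>2$ case is then the ``generic'' one where $\Gamma_\infty^+\Gamma(\stufe)=\Gamma_\infty\Gamma(\stufe)$ restricted to the relevant double coset, and the $\stufe\le 2$ case gets the extra factor from the two choices of sign.
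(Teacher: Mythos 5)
Your overall strategy coincides with the paper's: only terms $\det(C\tau+D)^{-k}$ with $C=0$ survive the limit, so everything reduces to counting which pieces of the expansion land in $\Gamma_{\infty}$, and your treatment of the first assertion is complete and is essentially the paper's argument. For the second assertion, however, your write-up is a plan rather than a proof, and the paper organizes the computation differently in a way that avoids most of the bookkeeping you defer: it first strips off $\delta'$ using the already-proved transformation law $\E_{\gamma}|\delta'^{-1}=\overline\chi(\delta')\E_{\gamma}$, then writes $\E_{\gamma}|\gamma^{-1}\beta^{-1}=\sum_{\delta}\overline\chi(\delta)\E^*|\gamma\delta\gamma^{-1}\beta^{-1}$ and observes that $\gamma\delta\gamma^{-1}\beta^{-1}\in\Gamma_{\infty}\Gamma(\stufe)$ exactly for $\delta\in\Gamma_{\gamma}$, in which case $\overline\chi(\delta)\E^*|\gamma\delta\gamma^{-1}=\E^*$ (a fact already settled in the pre-proposition discussion of $\Gamma_{\gamma}$ versus $\Gamma^+_{\gamma}$); the answer is then $\overline\chi(\delta'\beta)\lim\E^*$ with $\lim\E^*\in\{1,2\}$ quoted from the earlier paper. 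In your term-by-term version the two statements you phrase as expectations do require proof: that a pair $(\delta^*,\delta)$ survives iff $\delta\in\Gamma_{\gamma}\delta'$ (which follows since $\delta^*\gamma\delta\delta'^{-1}\gamma^{-1}\beta^{-1}\in\Gamma_{\infty}$ for some $\delta^*\in\Gamma^+_{\infty}\Gamma(\stufe)$ iff $\gamma\delta\delta'^{-1}\gamma^{-1}\in\Gamma_{\infty}\Gamma(\stufe)$), and that for that single $\delta$ the surviving $\delta^*$ form one coset when $\stufe>2$ and two when $\stufe\le 2$ (because $\gamma_{\pm}\in\Gamma(\stufe)$ there). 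Two further points need care. First, the sum over $\delta^*$ is infinite, so "since the sum is finite" does not justify passing the limit inside; one needs the locally uniform convergence supplied by $k>n+1$. Second, the surviving term's value is $(\det D)^{-k}$ for the $D$-block of a product whose factors are not all block upper triangular, and identifying this with $\overline\chi(\delta'\beta)$ when $\det D_{\beta}=-1$ ultimately rests on $\chi(-1)=(-1)^k$, which is forced by $\E_{\gamma}\ne 0$; your parenthetical appeal to "the choice of square root / the character conventions of Section 2" does not supply this, since the weight is integral and no square root is involved.
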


\begin{proof}  Since $\E_{\gamma}\not=0$, we have $\E^*\not=0$ (so if $\stufe\le 2$, $k$ must be even).
In \cite{int wt}, we saw that
$$\lim_{\tau\to i\infty}\E^*(\tau)=
\begin{cases}2&\text{if $\stufe\le 2$,}\\1&\text{if $\stufe>2$.}\end{cases}$$
Thus $\lim_{\tau\to i\infty}\E_{\gamma}|\alpha^{-1}=0$ unless there is some $\delta\in\Gamma_0(\stufe)$ so that $\gamma\delta\alpha^{-1}\in\Gamma_{\infty}\Gamma(\stufe)$; so this limit is 0 whenever $\alpha\not\in\Gamma_{\infty}\gamma\Gamma_0(\stufe)$.

Now  suppose that  $\alpha=\beta\gamma\delta'$ for some $\beta\in\Gamma_{\infty}$ and some $\delta'\in\Gamma_0(\stufe)$.
Thus $$\E_{\gamma}|\alpha^{-1}=\overline\chi(\delta')\E_{\gamma}|\gamma^{-1}\beta^{-1}.$$
Also,
\begin{align*}
\lim_{\tau\to i\infty}\E_{\gamma}(\tau)|\gamma^{-1}\beta^{-1}
&=\lim_{\tau\to i\infty}\sum_{\delta\in\Gamma_{\gamma}\backslash\Gamma_0(\stufe)}
\overline\chi(\delta)\E^*(\tau)|\gamma\delta\gamma^{-1}\beta^{-1}\\
&=\lim_{\tau\to i\infty}\overline\chi(\beta)\E^*(\tau)
\end{align*}
as $\gamma\delta\gamma^{-1}\beta^{-1}\in\Gamma_{\infty}\Gamma(\stufe)$ if and only if $\delta\in\Gamma_{\gamma}$ (in which case $\overline\chi(\delta)\E^*|\gamma\delta\gamma^{-1}=\E^*$).
Hence 
$$\lim_{\tau\to i\infty}\E_{\gamma}(\tau)|\alpha^{-1}=\overline\chi(\delta'\beta)
\lim_{\tau\to i\infty}\E^*(\tau).$$
(Note that $\chi$ is trivial when $\stufe\le 2$.)
\end{proof}

\smallskip\noindent{\bf Remark.}  Suppose that $\E_{\gamma}\not=0$.  Recall that earlier we noticed that $\E_{\gamma_{\pm}\gamma}=(-1)^k\E_{\gamma}.$  Thus with $\kappa=1/2$ when $\stufe\le 2$ and $\kappa=1$ otherwise, by the above proposition we have
\begin{align*}
\kappa\overline\chi(\gamma_{\pm})
&=\lim_{\tau\to i\infty}\E_{\gamma}(\tau)|(\gamma_{\pm}\gamma)^{-1}\\
&=(-1)^k\lim_{\tau\to i\infty}\E_{\gamma_{\pm}\gamma}(\tau)|(\gamma_{\pm}\gamma)^{-1}\\
&=\kappa(-1)^k.
\end{align*}
Hence when $\E_{\gamma}\not=0$, we have $\chi(-1)=(-1)^k$.

\bigskip
\section{Representatives for 0-dimensional cusps}

In this section, we assume that $\stufe$ is odd and
we determine a set of representatives for 
the 0-dimensional cusps, each of which corresponds to an element of
$\Gamma_{\infty}\backslash Sp_n(\Z)/\Gamma_0(\stufe)$.
The representatives we find are of the form $\begin{pmatrix}I&0\\M&I\end{pmatrix}.$

\smallskip
\noindent{\bf Definition.}  
Take $M\in\Z^{n,n}_{\sym}$ and set $\gamma_{_M}=\begin{pmatrix}I&0\\M&I\end{pmatrix}$
(so $\gamma_{_M}\in Sp_n(\Z)$).
Set $H=\begin{pmatrix}0&1\\1&0\end{pmatrix}$ and $A=\begin{pmatrix}2&1\\1&2\end{pmatrix}$.  We write $H^d$ to denote the orthogonal sum of $d$ copies of $H$.  
\begin{enumerate}
\item[(a)]   Let $q$ be an odd prime; fix $\omega$ so that $\left(\frac{\omega}{q}\right)=-1$.  For $e\in\Z_+$, we say that $\gamma_{_M}$ is a reduced representative modulo $q^e$ if the following conditions are met.
\begin{enumerate}
\item[(i)]  $M\equiv M_0\perp qM_1\perp\cdots\perp q^eM_e\ (q^e)$ with each $M_j$ $d_j\times d_j$ and invertible modulo $q$; take $\ell$ minimal so that $d_{\ell}>0$ and take $h$ maximal so that $d_h>0$;
\item[(ii)]  if $\ell<j<e$
with $d_j>0$ then  $M_j=\big<1,\ldots,1,\varepsilon_j\big>$
where $\varepsilon_j=1$ or $\omega$;
\item[(iii)]  if $0<\ell<h=e$ then $M_{\ell}=\big<1,\ldots,1,\varepsilon_{\ell}\big>$
where $\varepsilon_{\ell}=1$ or $\omega$;
\item[(iv)]  if $\ell\le h<e$ then $M_{\ell}=\big<1,\ldots,1,\varepsilon_{\ell}\big>$ where $1\le \varepsilon\le q^{\min(\ell,e-h)}$, $q\nmid\varepsilon_{\ell}$.
\end{enumerate}
\item[(b)]  For $n=1$ and $e\in\Z_+$, we say that $\gamma_{_M}$ is a reduced representative modulo $2^e$ if $M\equiv 2^{\ell}\varepsilon\ (2^e)$ where $1\le \varepsilon\le 2^{\min(\ell,e-\ell)}$ with $2\nmid\varepsilon$.
\item[(c)]  For $n>1$, we say that $\gamma_{_M}$ is a reduced representative modulo $2$ if for some $d\in\Z$, $M\equiv I_d\perp 0_{n-d}\ (2)$; we say that $\gamma_{_M}$ is a reduced representative modulo 4 if for some $d\in\Z$,
$M\equiv I_d\perp 2J_1\perp 4J_2\ (4)$ where either $J_1= I_{d'}$
or $J_1=H\perp\cdots\perp H$.
\item[(d)]  For $n>1$ and $e\ge 3$, we say that $\gamma_{_M}$ is a partially reduced representative modulo $2^e$ if the following conditions are met.
\begin{enumerate}
\item[(i)]  $M\equiv M_0\perp 2M_0\perp\cdots\perp 2^eM_e\ (2^e)$ where each $M_j$ is $d_j\times d_j$ and invertible modulo 2;  take $\ell$ minimal so that $d_{\ell}>0$;
\item[(ii)] if $d_0>0$ then $M_0\equiv I\ (2^e)$;
\item[(iii)]   if $\ell<j<e$ with $d_j>0$, then either $M_j$ is diagonal with diagonal entries from the set $\{1,3,5,7\}$, or $M_j=H^{d_j/2}$, or
$M_j=H^{d_j/2-1}\perp A$;
\item[(iv)]  if $0<\ell<e$ then either $M_{\ell}=\big<\eta_1,\ldots,\eta_{d_{\ell}}\big>$ with
$\eta_1,\ldots,\eta_{d_{\ell}-1}\in\{1,3,5,7\}$ and $\eta_{d_{\ell}}$ odd, or
$M_{\ell}=H^{d_{\ell}/2-1}\perp A'$ where $A'=\begin{pmatrix}2a'&a\\a&2a'a^2\end{pmatrix}$ with $a$ odd and $a'=0$ or 1.
\end{enumerate}
\end{enumerate}
For $\stufe\in\Z_+$ with $8\nmid\stufe$, we say that $\gamma_{_M}$ is a reduced representative modulo $\stufe$ if $\gamma_{_M}$ is a reduced representative modulo $q^e$ for each prime $q|\stufe$ with $q^e\parallel\stufe$.  
\smallskip
\smallskip

We will show that each element of $\Gamma_{\infty}\backslash Sp_n(\Z)/\Gamma_0(\stufe)$ is represented by exactly one reduced representative modulo $\stufe$.  We begin with the following easy proposition.

\begin{prop}  Fix $\stufe\in\Z_+$.
\begin{enumerate}
\item[(a)]  Suppose that $\delta\in Sp_n(\Z)$.  Then there is some $M''\in\Z^{n,n}_{\sym}$
so that
$\delta\in\Gamma_{\infty}\gamma_{_{M''}}\Gamma_0(\stufe).$
\item[(b)]  Suppose that $M,M''\in\Z^{n,n}_{\sym}$ so that $G(M''\ I)\beta\equiv (M\ I)\ (\stufe)$ where $G\in GL_n(\Z)$ and $\beta\in \Gamma_0(\stufe)$.
Then
$\gamma_{_{M''}}\in\Gamma_{\infty}
\gamma_{_{M}}\Gamma_0(\stufe).$
\end{enumerate}
\end{prop}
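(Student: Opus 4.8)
\emph{Part (a).} The plan is to move $\delta=\begin{pmatrix}A&B\\C&D\end{pmatrix}$, by a left multiplication from $\Gamma_{\infty}$, into a position where the upper-left block is invertible modulo $\stufe$, and then read off $M''$ directly. Concretely, I would first establish that there is $S\in\Z^{n,n}_{\sym}$ with $\det(A+SC)$ prime to $\stufe$ (see the next paragraph). Granting this, put $\beta_{\infty}=\begin{pmatrix}I&S\\0&I\end{pmatrix}\in\Gamma_{\infty}$ and write $\beta_{\infty}\delta=\begin{pmatrix}A'&B'\\C'&D'\end{pmatrix}$; since $\beta_{\infty}\delta\in Sp_{n}(\Z)$ we have ${}^{t}A'C'={}^{t}C'A'$, whence $C'(A')^{-1}$ is symmetric modulo $\stufe$, and I take $M''\in\Z^{n,n}_{\sym}$ reducing to it. A one-line computation then gives that the bottom-left block of $\gamma_{_{M''}}^{-1}\beta_{\infty}\delta$ equals $C'-M''A'\equiv0\ (\stufe)$, so $\gamma_{_{M''}}^{-1}\beta_{\infty}\delta\in\Gamma_{0}(\stufe)$ and therefore $\delta\in\beta_{\infty}^{-1}\gamma_{_{M''}}\Gamma_{0}(\stufe)\subseteq\Gamma_{\infty}\gamma_{_{M''}}\Gamma_{0}(\stufe)$, as wanted.

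\emph{The auxiliary claim.} To produce $S$, I would work one prime at a time: by the Chinese Remainder Theorem (and because a square matrix over $\Z/q^{e}\Z$ is invertible iff its reduction mod $q$ is) it suffices, for each prime $q\mid\stufe$, to find a symmetric $\overline S\in\F_{q}^{n,n}$ with $\overline A+\overline S\,\overline C\in GL_{n}(\F_{q})$, and then to lift the $\overline S$'s back to one symmetric integral $S$. Over $\F_{q}$ the $n$ columns of $\begin{pmatrix}\overline A\\\overline C\end{pmatrix}$ are independent and span a subspace $V$ that is isotropic for the standard symplectic form, because ${}^{t}\overline A\,\overline C={}^{t}\overline C\,\overline A$; so $V$ is a Lagrangian. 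I would then invoke the standard fact (via an adapted symplectic basis) that any two Lagrangians of a symplectic space over a field have a common transversal Lagrangian, and apply it to $V$ and to $L_{0}=\{(v,0):v\in\F_{q}^{n}\}$ to get such a $W$. Being transverse to $L_{0}$, $W$ is the graph $\{(-\overline Sw,w):w\in\F_{q}^{n}\}$ of $-\overline S$ for a unique symmetric $\overline S$; and being transverse to $V$ is exactly the statement that $\overline A+\overline S\,\overline C$ is invertible.

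\emph{Part (b).} Here I would first absorb $G$ and $\beta$: set $\widetilde G=\begin{pmatrix}({}^{t}G)^{-1}&0\\0&G\end{pmatrix}\in\Gamma_{\infty}$ and $\delta_{1}=\widetilde G\gamma_{_{M''}}\beta$, so that $\delta_{1}\in Sp_{n}(\Z)$ has bottom row $G(M''\ I)\beta\equiv(M\ I)\ (\stufe)$ and $\gamma_{_{M''}}=\widetilde G^{-1}\delta_{1}\beta^{-1}$ with $\widetilde G^{-1}\in\Gamma_{\infty}$, $\beta^{-1}\in\Gamma_{0}(\stufe)$; thus it is enough to show that any $\delta_{1}\in Sp_{n}(\Z)$ whose bottom row is $\equiv(M\ I)\ (\stufe)$ lies in $\Gamma_{\infty}\gamma_{_{M}}\Gamma_{0}(\stufe)$. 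I would do this by looking at $\xi=\delta_{1}\gamma_{_{M}}^{-1}$: if $(C_{1}\ D_{1})$ is the bottom row of $\delta_{1}$, the bottom row of $\xi$ is $(C_{1}-D_{1}M\ \ D_{1})\equiv(0\ I)\ (\stufe)$, and then the symplectic relations force $\xi\equiv\begin{pmatrix}I&B\\0&I\end{pmatrix}\ (\stufe)$ with $B$ symmetric. Lifting $B$ to a symmetric integral $B^{*}$ gives $\xi\begin{pmatrix}I&-B^{*}\\0&I\end{pmatrix}\in\Gamma(\stufe)$, so $\xi\in\Gamma(\stufe)\,\Gamma_{\infty}$ and $\delta_{1}=\xi\gamma_{_{M}}\in\Gamma(\stufe)\,\Gamma_{\infty}\gamma_{_{M}}$. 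Finally, using that $\Gamma(\stufe)$ is a normal subgroup of $Sp_{n}(\Z)$ contained in $\Gamma_{0}(\stufe)$, I push the $\Gamma(\stufe)$-factor past $\Gamma_{\infty}$ and then past $\gamma_{_{M}}$, landing in $\Gamma_{\infty}\gamma_{_{M}}\Gamma(\stufe)\subseteq\Gamma_{\infty}\gamma_{_{M}}\Gamma_{0}(\stufe)$.

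\emph{Where the work is.} Apart from one step, both parts are pure bookkeeping with the groups $\Gamma_{\infty}$, $\Gamma_{0}(\stufe)$, $\Gamma(\stufe)$, using only the normality of $\Gamma(\stufe)$ in $Sp_{n}(\Z)$ and $\Gamma(\stufe)\subseteq\Gamma_{0}(\stufe)$. The one genuine ingredient is the auxiliary claim in (a) — that the upper-left block of $\delta$ can be made a unit mod $\stufe$ by a shift from $\Gamma_{\infty}$ — which rests on the symplectic-linear-algebra fact about common transversal Lagrangians over $\F_{q}$; I expect that to be the only place needing more than formal manipulation.
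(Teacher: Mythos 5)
Your argument is correct, and it is genuinely self-contained where the paper's is not: the paper disposes of both parts by citing Propositions 3.3 and 3.4 of \cite{int wt}. For (a), the paper invokes the factorization $(C\ D)=E(M''\ I)\gamma$ with $E\in SL_n(\Z)$, $\gamma\in\Gamma_0(\stufe)$, of the coprime symmetric pair forming the bottom row of $\delta$, and then observes that two symplectic matrices with the same bottom row differ by a left factor in $\Gamma_{\infty}$. You instead normalize $\delta$ on the left by $\begin{pmatrix}I&S\\0&I\end{pmatrix}$ so that the upper-left block becomes invertible modulo $\stufe$, and read off $M''$ as a symmetric lift of $C'(A')^{-1}$; the existence of $S$ is exactly the statement that the Lagrangian spanned by the columns of $\begin{pmatrix}A\\C\end{pmatrix}$ and the horizontal Lagrangian admit a common transversal over each $\F_q$, which does hold for any two Lagrangians over any field (including $\F_2$), so that step is sound. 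For (b), your computation that $\delta_1\gamma_{_M}^{-1}$ is congruent to an upper-unipotent symplectic matrix modulo $\stufe$, together with a symmetric lift of its $B$-block and the normality of $\Gamma(\stufe)$, is precisely the content the paper outsources to Proposition 3.3 of \cite{int wt} followed by the same coset bookkeeping. What your route buys is independence from the companion paper at the cost of the symplectic-linear-algebra digression; what the paper's route buys is brevity, since the cited factorization of coprime symmetric pairs is already established there. One cosmetic caution: $(A')^{-1}$ and $C'(A')^{-1}$ exist only over $\Z/\stufe\Z$, so the symmetric lift of $M''$ should be phrased there, as you essentially do.
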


\begin{proof}  
(a)  Write $\delta=\begin{pmatrix}A&B\\C&D\end{pmatrix}$.  By Proposition 3.4 \cite{int wt},
there is some $M''\in\Z^{n,n}_{\sym}$ so that 
$(C\ D)=E(M''\ I)\gamma$
for some $E\in SL_n(\Z)$ and $\gamma\in\Gamma_0(\stufe).$
Hence with $$\beta=\begin{pmatrix}^tE^{-1}\\&E\end{pmatrix}
\text{ and }\gamma_0=\begin{pmatrix}I&0\\M''&I\end{pmatrix},$$
we have $\beta\in\Gamma_{\infty}$ and
$$\beta\gamma_0\gamma
=\begin{pmatrix}*&*\\C&D\end{pmatrix}\in Sp_n(\Z).$$
Therefore
$\delta\in\Gamma_{\infty}\beta\gamma_0\gamma
\subseteq \Gamma_{\infty}\gamma_0\Gamma_0(\stufe).$

(b)  By Proposition 3.3 \cite{int wt}, we have $G(M''\ I)\beta\in(M\ I)\Gamma(\stufe)$
and hence
$$\begin{pmatrix}^tG^{-1}\\&G\end{pmatrix}\begin{pmatrix}I&0\\ M''&I\end{pmatrix}\beta
\in
\Gamma_{\infty}\begin{pmatrix}I&0\\M&I\end{pmatrix}\Gamma(\stufe).$$
From this the claim easily follows.
\end{proof}

\begin{prop}  Let $\stufe\in\Z_+$, and take $\delta\in Sp_n(\Z)$.  Set $e'=\ord_2(\stufe)$.  When $e'\le 2$ or $n=1$ there is a reduced representative $\gamma_{_M}$ modulo $\stufe$ so that  
$\delta\in\Gamma_{\infty}\gamma_{_M}\Gamma_0(\stufe)$.  When $e'\ge 3$, there is some $\gamma_{_M}\in Sp_n(\Z)$ with 
$\gamma_{_M}$ reduced modulo $\stufe/2^{e'}$, $\gamma_{_M}$ partially reduced modulo $2^{e'}$, and 
$\delta\in\Gamma_{\infty}\gamma_{_M}\Gamma_0(\stufe)$.
\end{prop}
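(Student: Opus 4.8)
The plan is to reduce everything to a local, prime-by-prime normalization of the symmetric matrix $M$, using Proposition 4.1 to translate between the double coset $\Gamma_{\infty}\gamma_{_M}\Gamma_0(\stufe)$ and the equivalence $M\mapsto GMG'$-type operations that are available modulo $\stufe$. First I would invoke Proposition 4.1(a) to write $\delta\in\Gamma_{\infty}\gamma_{_{M''}}\Gamma_0(\stufe)$ for some symmetric $M''$; so it suffices to show that any such $\gamma_{_{M''}}$ can be moved, via the operations in Proposition 4.1(b), to a $\gamma_{_M}$ of the desired reduced (or partially reduced) shape. By the Chinese Remainder Theorem it is enough to carry out the normalization one prime at a time: for each prime $q\mid\stufe$ with $q^e\parallel\stufe$ we work modulo $q^e$, and then splice the local answers back together. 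The key point is that Proposition 4.1(b) lets us replace $M$ by $GM\,^tG \pmod{q^e}$ for $G\in GL_n(\Z_q)$ (taking $\beta$ of the block-diagonal form $\mathrm{diag}(^tG^{-1},G)$ intersected with $\Gamma_0(\stufe)$, which one arranges by a CRT lift equal to $I$ at the other primes), together with the extra freedom coming from genuine elements of $\Gamma_0(\stufe)$ that are nontrivial modulo $q^e$ in the lower-left block, which is what allows the ``shift'' reductions built into conditions (iv) of Definition 4(a) and (d).

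The heart of the argument is then the classical theory of $q$-adic Jordan splittings of symmetric matrices. For $q$ odd this is completely standard: any symmetric $M$ over $\Z_q$ is $GL_n(\Z_q)$-congruent to a block diagonal form $M_0\perp qM_1\perp\cdots\perp q^eM_e$ with each $M_j$ unimodular, and each unimodular block is in turn diagonalizable to $\langle1,\dots,1,\varepsilon_j\rangle$ with $\varepsilon_j$ determined modulo squares, i.e. $\varepsilon_j\in\{1,\omega\}$ — this gives conditions (i)–(iii). Condition (iv), the case $\ell\le h<e$ (equivalently, the top Jordan constituent occurs below level $e$), is where the extra $\Gamma_0(\stufe)$-freedom enters: one uses that conjugation by suitable elements of $\Gamma_0(q^e)$ changes $\varepsilon_\ell$ by a scalar square coming from a restricted range, so that $\varepsilon_\ell$ may be taken to be a representative $1\le\varepsilon_\ell\le q^{\min(\ell,e-h)}$ with $q\nmid\varepsilon_\ell$; I would verify this by an explicit computation with $2\times2$ or $n\times n$ blocks of the form $\gamma_{_M}\beta$. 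For $q=2$ the same skeleton applies but with $2$-adic Jordan forms, whose unimodular constituents are orthogonal sums of the unary forms $\langle1\rangle,\langle3\rangle,\langle5\rangle,\langle7\rangle$ and the binary forms $H$ and $A$; normalizing these (and handling the subtle interaction between adjacent Jordan constituents via $A'$-type blocks at the lowest level) is exactly what conditions (c) and (d) encode, so the $e'\le2$ cases (b),(c) and the $e'\ge3$ case (d) are dispatched by quoting the relevant structure theorem for $\Z_2$-lattices and then applying the same scalar-shift trick as in the odd case.

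The main obstacle I expect is not the existence of a Jordan splitting — that is classical — but rather pinning down exactly how much normalizing freedom the group $\Gamma_0(\stufe)$ (as opposed to the full $GL_n$ congruence) actually gives, i.e. justifying precisely the ranges and the choice of invariants in conditions (iv) of (a) and (d). One must be careful that the replacement $M\rightsquigarrow GM\,^tG$ is only legitimate modulo $\stufe$ and that the matrix $\beta$ used is genuinely in $\Gamma_0(\stufe)$ and genuinely equal to $I$ at the other primes; the CRT bookkeeping, while routine in spirit, is where sign and index-$2$ subtleties creep in, and it is also where the asymmetry between $e'\le2$ (full reduction modulo $\stufe$, including the $2$-part) and $e'\ge3$ (only partial reduction modulo $2^{e'}$, reduction modulo $\stufe/2^{e'}$) becomes visible — the obstruction to full reduction modulo $8\mid\stufe$ when $n>1$ is precisely that the relevant scalar-square group no longer acts transitively enough on the $2$-adic unimodular constituents. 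I would isolate this as a sequence of short lemmas (one for odd $q$, one for $q=2$ with $e'\le2$, one for $q=2$ with $e'\ge3$), each reducing to an explicit matrix identity, and then assemble them by CRT; after that, Proposition 4.1(a)–(b) finishes the proof.
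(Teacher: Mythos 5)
Your proposal follows essentially the same route as the paper: invoke Proposition 4.1(a) to land in some $\Gamma_{\infty}\gamma_{_{M''}}\Gamma_0(\stufe)$, normalize prime by prime using the classical $q$-adic Jordan splitting (O'Meara \S91, \S93), realize each local normalization by matrices congruent to $I$ at the other primes, splice by CRT, and finish with Proposition 4.1(b), with the only genuinely new work being the explicit $SL_2$-block computations that pin down $\varepsilon_{\ell}$ in condition (iv). One small correction: the extra freedom beyond $M\mapsto\,^tGMG$ comes from the upper-right block $B$ of elements of $\Gamma_0(\stufe)$ (acting by $(M\ I)\mapsto(MA+C\ \ MB+D)$ with $C\equiv0\ (\stufe)$, so $M'\equiv(MB+D)^{-1}MA\ (q^e)$), not from the lower-left block, which is forced to vanish modulo $\stufe$ and hence contributes nothing modulo $q^e$.
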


\begin{proof}  
By Proposition 4.1, there is some $M''\in\Z^{n,n}_{\sym}$ so that
$\delta\in\Gamma_{\infty}\gamma_{_{M''}}\Gamma_0(\stufe).$
We show that
$\gamma_{_{M''}}\in\Gamma_{\infty}
\gamma_{_{M}}\Gamma_0(\stufe)$
where $\gamma_{_{M}}$ is a reduced representative modulo $\stufe$, and hence
$\delta\in\Gamma_{\infty}\gamma_{_{M}}\Gamma_0(\stufe).$

To do this,
for each prime $q|\stufe$ with $q^e\parallel\stufe$, 
we find matrices $E(q), G(q)\in SL_n(\Z)$ with $E(q)G(q)\equiv I\ (\stufe/q^e)$, and 
$\alpha(q)\beta(q)\in\Gamma_0(\stufe)$ with $\alpha(q)\beta(q)\equiv I\ (\stufe/q^e)$, and so that
$$^tE(q)\,^tG(q)(M''\ I)\alpha(q)\beta(q)\equiv (M\ I)\ (q^e)$$
where $\gamma_{_M}$ is reduced modulo $q^e$ (or partially reduced when $q=2$ and $e\ge 3$).
Then we define $E(\stufe)$, $G(\stufe)$, $\alpha(\stufe)$, $\beta(\stufe)$ by setting $E(\stufe)=\prod_{q|\stufe}E(q)$ and so on. Thus we get 
$$^tE(\stufe)\,^tG(\stufe)(M''\ I)\alpha(\stufe)\beta(\stufe)\equiv (M\ I)\ (\stufe).$$
Consequently, by Proposition 4.1, 
$\delta\in \Gamma_{\infty}\gamma_{_M}\Gamma_0(\stufe).$

We first consider the case that $q$ is odd.

(a) Fix an odd prime $q|\stufe$ and $e\in\Z_+$ so that $q^e\parallel\stufe$; fix $\omega\in\Z$ so that $\left(\frac{\omega}{q}\right)=-1$. 
We know by \S91 \cite{O'M}, or equivalently Corollary 8.2 and Theorem 85 of \cite{Ger}, that
 there is some $G''\in GL_n(\Z_q)$ so that
$$^tG''M''G''=M_0''\perp qM_1''\perp\cdots\perp q^eM''_e$$
with $Mji''$ of size $d_j\times d_j$ for some $d_j$, and when $d_j>0$ with $j<e$,
$M_j''=\big<1,\ldots,1,\eta''_j\big>$ where $\eta''_j=1$ or $\omega$.
Fix $\ell$ to be minimal with $d_{\ell}>0$.  Then
right-multiplying $G''$ by a suitable diagonal matrix, we obtain $G'\in SL_n(\Z_q)$ so that
$$^tG'M''G'=q^{\ell}M'_{\ell}\perp q^{\ell+1}M'_{\ell+1}\perp
\cdots\perp q^eM'_e$$
where 
$M'_j=\big<1,\ldots,1,\eta'_j\big>$, with  $\eta_j'=\eta_j''$ for $\ell<j<e$ when $d_j>0$,
and $\eta'_{\ell}=\eta''_{\ell}(\det G'')^{-2}.$  
Now take $G=G(q)\in SL_n(\Z)$ so that $G\equiv I\ (\stufe/q^e)$ and
$G\equiv G'\ (q^e)$, and set $M'=\,^tGM''G$.
Thus $M'\equiv q^{\ell}M'_{\ell}\perp \cdots\perp q^eM'_e\ (q^e).$
Set $\alpha=\alpha(q)=\begin{pmatrix}G\\&^tG^{-1}\end{pmatrix}$; then $\alpha\in\Gamma_0(\stufe)$ with $\alpha\equiv I\ (\stufe/q^e)$,
and
$$^tG(M''\ I)\alpha= (\,^tGM''G\ I).$$

We now find $E=E(q)\in SL_n(\Z)$ and $\beta=\beta(q)\in\Gamma_0(\stufe)$ so that 
$E(M'\ I)\beta\equiv(M'\ I)\ (\stufe/q^e)$ and  
$E(M'\ I)\beta\equiv(M\ I)\ (q^e)$ where $\gamma_{_M}$ is a reduced representative modulo $q^e$.

First note that for $\ell<j<e$ we have $M'_j=\big<1,\ldots,1,\eta'_j\big>=\varepsilon_j$ where $\eta'_j=1$ or $\omega$.  

Suppose that $0<\ell< h=e$.  Take $u\in\Z$ so that 
$\eta'_{\ell}u^2\equiv 1$ or $\omega$ modulo $q^e$, and take $\overline u$ so that 
$u\overline u\equiv 1\ (q^e)$.
Take $E'=\begin{pmatrix}w&x\\y&z\end{pmatrix}\in SL_2(\Z)$ so that $E'\equiv \begin{pmatrix}u\\&\overline u\end{pmatrix}\ (q^e)$ and $E'\equiv I\ (\stufe/q^e)$.
Take $\beta'=\begin{pmatrix}E'\\&^t(E')^{-1}\end{pmatrix}.$
Thus 
$$^tE'\begin{pmatrix}q^{\ell}\eta'_{\ell}&&1\\&0&&1\end{pmatrix}\beta'
\equiv\begin{pmatrix}q^{\ell}\eta'_{\ell}u^2&&1\\&0&&1\end{pmatrix}\ (q^e).$$
We lift $E'$ to
$$E=E(q)=\begin{pmatrix}W&X\\Y&Z\end{pmatrix}\in SL_n(\Z)$$ with $E\equiv I\ (\stufe/q^e)$ by taking
$$W=I_{d_{\ell}-1}\perp\big<w\big>,
\ X=0_{d_{\ell}-1}\perp\big<x\big>,$$
$$Y=0_{n-d_{\ell}-1}\perp\big<y\big>,
\ Z=I_{n-d_{\ell}-1}\perp\big<z\big>.$$
Set $\beta=\beta(q)=\begin{pmatrix}E\\&^tE^{-1}\end{pmatrix}.$
Then $$^tE(M'\ I)\beta\equiv(M\ I)\ (q^e)$$
where $\gamma_{_M}$ is reduced modulo $q^e$.

Now suppose that $\ell\le h<e$ and $\ell\le e-h$.  Choose $\eta_{\ell}$ so that $1\le\eta_{\ell}\le q^{\ell}$ with $\eta_{\ell}\equiv\eta'_{\ell}\ (q^{\ell}).$  Thus with 
$\overline \eta_{\ell}\in\Z$ so that $\overline \eta_{\ell}\eta_{\ell}\equiv 1\ (q^e)$, we have $\overline\eta_{\ell}\eta'_{\ell}=1+q^{\ell} b'$ for some $b'\in\Z$.  Take  $b=-\overline\eta'_{\ell}b'$, and take $\beta'=\begin{pmatrix}w&x\\y&z\end{pmatrix}\in SL_2(\Z)$ so that $\beta'\equiv I\ (\stufe/q^e)$ and 
$\beta'\equiv
\begin{pmatrix}\overline\eta'_{\ell}\eta_{\ell}&b\\
&\eta'_{\ell}\overline\eta_{\ell}\end{pmatrix}\ (q^e).$
Then
$$(q^{\ell}\eta'_{\ell}\ \,1)\beta'\equiv(q^{\ell}\eta_{\ell}\ \,1)\ (q^e).$$
We lift $\beta'$ to $\beta=\beta(q)\in\Gamma_0(\stufe)$ with $\beta\equiv I\ (\stufe/q^e)$ by setting $\beta=\begin{pmatrix}W&X\\Y&Z\end{pmatrix}$ where
$$W=I_{d_{\ell}-1}\perp\big<w\big>\perp I_{n-\ell},
\ X=0_{d_{\ell}-1}\perp\big<x\big>\perp 0_{n-\ell},$$
$$Y=0_{d_{\ell}-1}\perp\big<y\big>\perp 0_{n-\ell},
\ Z=I_{d_{\ell}-1}\perp\big<z\big>\perp I_{n-\ell}.$$
Then $(M'\ I)\beta\equiv (M\ I)\ (q^e)$ where $\gamma_{_M}$ is reduced modulo $q^e$.
We set $E(q)=I$.

Finally, suppose that $\ell\le h<e$ and
$0<e-h<\ell<e$.   Choose $\eta_{\ell}$ so that $1\le \eta_{\ell}\le q^{e-h}$ with $\eta_{\ell}\equiv\eta'_{\ell}\ (q^{e-h})$.  As $e-h>0$, we know that 
$\left(\frac{\eta_{\ell}\eta'_{\ell}}{q}\right)=1$ and so there is some $u\in\Z$ so that $\eta'_{\ell}u^2\equiv\eta_{\ell}\ (q^e)$.  
Take $E'=\begin{pmatrix}w&x\\y&z\end{pmatrix}\in SL_2(\Z)$ so that $E'\equiv I\ (\stufe/q^e)$
and $E'\equiv\begin{pmatrix}u\\&\overline u\end{pmatrix}\ (q^e)$ where $\overline uu\equiv 1\ (q^e).$  Take $\beta'=\begin{pmatrix}E'\\&^t(E')^{-1}\end{pmatrix}.$
Then
$$^tE'
\begin{pmatrix}q^{\ell}\eta'_{\ell}&&1\\&q^h\eta'_{h}&&1\end{pmatrix}
\beta'\equiv\begin{pmatrix}q^{\ell}\eta_{\ell}&&1
\\&q^h\eta'_{h}\overline u^2&&1\end{pmatrix}\ (q^e).$$
We have $\overline u^2\equiv\eta'_{\ell}\overline\eta_{\ell}\equiv1\ (q^{e-h})$, and thus
$q^h\eta'_h\overline u^2
\equiv q^h\eta'_h\ (q^e).$
We lift $E'$ to $E=E(q)\in SL_n(\Z)$ with $E\equiv I\ (\stufe/q^e)$
by setting $E'=\begin{pmatrix}W&X\\Y&Z\end{pmatrix}$ where 
$$W=I_{d_{\ell}-1}\perp\big<w\big>,
\ X=0_{d_{\ell}-1}\perp\big<x\big>,$$
$$Y=0_{n-d_{\ell}-1}\perp\big<y\big>,
\ Z=I_{n-d_{\ell}-1}\perp\big<z\big>.$$
Set $\beta=\beta(q)=\begin{pmatrix}E\\&^tE^{-1}\end{pmatrix}.$
So $\beta\in\Gamma_0(\stufe)$ with $\beta\equiv I\ (\stufe/q^e)$, and
$$^tE(M'\ I)\beta\equiv(M\ I)\ (q^e)$$
where $\gamma_{_M}$ is reduced modulo $q^e$.

(b)  Now suppose that $n=1$, and fix a prime $q|\stufe$ with $q^e\parallel\stufe$; take $\ell$ and $\eta'$ so that $M''=q^{\ell}\eta'$, $q\nmid\eta'$.  If $\ell\ge e$ then $\gamma_{_{M''}}$ is a reduced representative modulo $q^e$.  So suppose that $\ell<e$.

Suppose that $e-\ell\le \ell$.  Take $\eta$ so that $1\le \eta\le q^{e-\ell}$ with $\eta\equiv\eta'\ (q^{e-\ell})$. Then $(q^{\ell}\eta'\ \,1)\equiv(q^{\ell}\eta\ \,1)\ (q^e)$, and so $\gamma_{_{M''}}$ is reduced modulo $q^e$.  Take $E(q), G(q), \alpha(q), \beta(q)$ to be identity matrices.

Suppose that $\ell<e-\ell$.  Choose $\eta$ so that $1\le\eta\le q^{\ell}$ with $\eta\equiv\eta'\ (q^{\ell})$.  Take $u$ so that $u\equiv\eta\overline{\eta'}\ (q^e)$ where $\overline{\eta'}\eta'\equiv1\ (q^e)$.  Thus with $\overline u$ so that $\overline u u\equiv 1\ (q^e)$, we have $\overline u=1+q^{\ell}b'$ for some $b'$.  Take $b= -\overline{\eta'}b'$, and take $\beta=\beta(q)\in SL_2(\Z)$ so that $\beta\equiv I\ (\stufe/q^e)$ and $\beta\equiv\begin{pmatrix}u&b\\0&\overline u\end{pmatrix}\ (q^e).$
Thus $(M''\ 1)\beta=(M\ 1)$ where $\gamma_{_M}$ is reduced modulo $q^e$.
Take $E(q), G(q), \alpha(q)$ to be identity matrices.

\smallskip
(c)  Suppose that $n>1$ and $2^e\parallel\stufe$ where $e>0$.
By  \S93 of \cite{O'M}, or equivalently Theorem 8.9 of \cite{Ger}, there is some $G''\in GL_n(\Z_2)$ so that
$$^tG''M''G''=M_0''\perp 2M''_1\perp\cdots\perp 2^eM''_e$$
with $M_j''$ $d_j\times d_j$ for some $d_j$, and when $d_j>0$ with $j<e$, either $M_j''$
is diagonal with entries from $\{1,3,5,7\}$, or $M_j''=H\perp\cdots\perp H$
where $H=\begin{pmatrix}0&1\\1&0\end{pmatrix}$, or $M_j''=H\perp\cdots\perp H\perp A$
where $A=\begin{pmatrix}2&1\\1&2\end{pmatrix}.$
Then as in case (a), we can take $G=G(2)\in SL_n(\Z)$ so that $G\equiv I\ (\stufe/2^e)$ and
$$M'=\,^tGM''G\equiv 2^{\ell}M'_{\ell}\perp\cdots\perp 2^eM'_e\ (2^e)$$
where  $M_j'=M_j''$ for $\ell<j< e$, and
$$M'_{\ell}=\begin{pmatrix}I\\&(\det G'')^{-1}\end{pmatrix}M_{\ell}''
\begin{pmatrix}I\\&(\det G'')^{-1}\end{pmatrix}.$$
Set $\alpha(2)=\begin{pmatrix}G\\&^tG^{-1}\end{pmatrix}.$

Note that if $d_1>0$ then $2M_1''\equiv 2I_{d_1}\ (4)$ or
$2M_1''\equiv H\perp\cdots\perp H\ (4);$ hence if $d_0=0$, then $\gamma_{_{M'}}$ is
completely reduced modulo $2^e$ if $e\le 2$, and partially reduced modulo $2^e$ if $e\ge 3$.
So when $d_0=0$ we take $E(2)=I$ and $\beta(2)=I$.

Suppose that $d_0>0$.   
Take $v\in\Z$ so that $v\equiv\det M_0'\ (2^e)$ and $v\equiv 1\ (\stufe/2^e)$;
take $\begin{pmatrix}w&x\\y&z\end{pmatrix}\in SL_2(\Z)$ so that
$\begin{pmatrix}w&x\\y&z\end{pmatrix}\equiv\begin{pmatrix}\overline v\\&v\end{pmatrix}\ (\stufe)$
where $\overline v v\equiv 1\ (\stufe).$
Take $\delta'=\begin{pmatrix}W&X\\Y&Z\end{pmatrix}$ where
$$W=I_{d_0}\perp\big< w\big>\perp I_{n-d_0},
\ X=0_{d_0}\perp\big< x\big>\perp 0_{n-d_0},$$
$$Y=0_{d_0}\perp\big< y\big>\perp 0_{n-d_0},
\ Z=I_{d_0}\perp\big< z\big>\perp I_{n-d_0}.$$
Now take $\overline{M_0'}$ so that $\overline{M_0'}M_0'\equiv I\ (2^e)$, and
take $U\in SL_{d_0}(\Z)$ so that $U\equiv\begin{pmatrix}I\\&v\end{pmatrix}\overline{M_0''}\ (2^e)$
and $U\equiv I\ (\stufe/2^e).$  Take $U'$ so that $U'\equiv U(I-M_0')\ (2^e)$ and $U'\equiv 0\ (\stufe/2^e).$
Set
$$\delta''=\begin{pmatrix}U&&U'\\&I_{n-d_0}\\&&^tU^{-1}\\&&&I_{n-d_0}\end{pmatrix}.$$
So $\delta''\in\Gamma_0(\stufe)$ and $\delta''\equiv I\ (\stufe/2^e)$.
Set $\beta=\beta(2)=\delta'\delta''$ and $E=E(2)=I$.
Then $^tE\,^tG(M''\ I)\alpha\beta\equiv (M\ I)\ (2^e)$ where $\gamma_{_M}$ is reduced modulo $2^e$
if $e\le 2$, and partially reduced modulo $2^e$ if $e\ge 3$.
\end{proof}

\begin{prop}  
Take $\stufe\in\Z_+$ and set $e'=\ord_2(\stufe)$.  Suppose that 
$\gamma_{_{M'}}\in\Gamma_{\infty}\gamma_{_M}\Gamma_0(\stufe)$ where
$\gamma_{_M}$ and $\gamma_{_{M'}}$ are reduced representatives modulo $\stufe/2^{e'}$; also suppose that  $\gamma_{_M}$ and $\gamma_{_{M'}}$ are reduced representatives modulo $2^{e'}$ when $e'\le 2$, and partially reduced representatives modulo $2^{e'}$ when $e'\ge 3$.
Then $M'\equiv M\ (\stufe/2^{e'})$, and $M'\equiv M\ (2^{e'})$ if $e'\le 2$.  Hence for $e'\le 2$, as $\gamma_{_M}$ varies over reduced representatives modulo $\stufe$, $\{\gamma_{_M}\}$ is a complete set of representatives for
$\Gamma_{\infty}\backslash Sp_n(\Z)/\Gamma_0(\stufe)$; for $e'\ge 3$, as $\gamma_{_M}$ varies over representatives that are reduced modulo $\stufe/2^{e'}$ and partially reduced modulo $2^{e'}$, the set $\{\gamma_{_M}\}$ contains a set of representatives for 
$\Gamma_{\infty}\backslash Sp_n(\Z)/\Gamma_0(\stufe)$.
\end{prop}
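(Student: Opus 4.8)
The plan is to show that if $\gamma_{_{M'}}\in\Gamma_{\infty}\gamma_{_M}\Gamma_0(\stufe)$ with both representatives reduced (respectively partially reduced at $2$), then $M'$ and $M$ agree modulo the relevant moduli; uniqueness follows immediately by combining this with Proposition 4.2, which produces at least one such representative for each double coset. By the Chinese remainder theorem the congruence $M'\equiv M$ can be checked one prime $q|\stufe$ at a time, so I would fix $q$ with $q^e\parallel\stufe$ and reduce to a local statement. The first step is to unpack what $\gamma_{_{M'}}\in\Gamma_{\infty}\gamma_{_M}\Gamma_0(\stufe)$ says about the bottom rows: it forces the existence of $G\in GL_n(\Z_q)$ and $\beta\in Sp_n(\Z_q)\cap\Gamma_0(\stufe\Z_q)$ with $G(M'\ I)\beta\equiv(M\ I)\ (q^e)$, i.e.\ $\,^tGM'G\equiv M\ (q^e)$ after absorbing the (upper-triangular, unimodular) block of $\beta$; so the question becomes whether two reduced Jordan-type forms that are $\Z_q$-equivalent in this sense must be literally congruent mod $q^e$.

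For odd $q$ the key input is the uniqueness part of the local Jordan splitting theory (\S91 of \cite{O'M}, or Corollary 8.2 and Theorem 85 of \cite{Ger}): the multiset of scales $q^j$ and dimensions $d_j$ is an invariant, and within each block the determinant is determined modulo squares, which over $\Z_q$ means the class of $\det M_j$ in $\{1,\omega\}$ is an invariant \emph{when} the block sits strictly between the bottom and top scales — this is exactly condition (ii) of the definition. The delicate cases are the bottom block $M_\ell$ and the top block, where the reduced form instead records $\varepsilon_\ell$ as a genuine residue modulo $q^{\min(\ell,e-h)}$ (conditions (iii), (iv)); here I would argue that the ambiguity in the conjugating matrix $G$ only affects $\det M_\ell$ by a unit square, and that a unit square in $\Z_q^\times$ is $\equiv 1\ (q^{\min(\ell,e-h)})$ for the relevant range — more precisely, one tracks how much precision is actually seen mod $q^e$ given the scale gap, and checks the normalization ranges in the definition were chosen precisely so that $\varepsilon_\ell$ is pinned down. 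For $q=2$, $e\le 2$ the analogous statement is an easy finite check using the list in part (c) of the definition (the $2$-adic unimodular vs.\ hyperbolic/$A$ dichotomy modulo $4$), and for $e'\ge 3$ one only claims the weaker containment, so partial reducedness need not give literal uniqueness at $2$ — this is why the statement is phrased asymmetrically.

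The main obstacle I anticipate is the bottom/top-block precision bookkeeping for odd $q$: one must show that the freedom to replace $M_\ell$ by $\,^tG_\ell M_\ell G_\ell$ for $G_\ell\in GL_{d_\ell}(\Z_q)$, together with the freedom coming from the off-diagonal entries of $\beta$ mixing adjacent scales, changes $\varepsilon_\ell$ only within one residue class modulo $q^{\min(\ell,e-h)}$, and symmetrically that nothing finer than that class is an invariant — both directions are needed, the first for well-definedness of the claim and the second (implicitly, via Proposition 4.2) for the count to come out right. Concretely I would write $G$ in block form adapted to the scale decomposition, observe that off-diagonal blocks contribute terms of strictly larger $q$-valuation that get truncated mod $q^e$, reduce to the diagonal blocks, and then invoke the computation already carried out inside the proof of Proposition 4.2 (the cases $0<\ell<h=e$, $\ell\le h<e$ with $\ell\le e-h$, and $0<e-h<\ell$) run in reverse: those computations show any $\varepsilon'_\ell$ can be brought to the normalized $\varepsilon_\ell$, and inspecting them shows two already-normalized values in the prescribed ranges cannot be identified. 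Finally I would assemble the per-prime congruences via CRT to conclude $M'\equiv M\ (\stufe/2^{e'})$ (and $(2^{e'})$ when $e'\le2$), and then state the counting consequences as in the proposition.
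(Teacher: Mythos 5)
Your overall architecture (localize at each prime, show the reduced data is a double-coset invariant, then combine with Proposition 4.2 for completeness) matches the paper's, but there is a genuine gap at your very first local step. From $\gamma_{_{M'}}\in\Gamma_{\infty}\gamma_{_M}\Gamma_0(\stufe)$ one gets $E(M\ I)\delta=(M'\ I)$ with $E\in SL_n(\Z)$ and $\delta=\begin{pmatrix}A&B\\C&D\end{pmatrix}\in\Gamma_0(\stufe)$, hence $M'\equiv EMA\ (q^e)$: a two-sided equivalence with a priori \emph{independent} invertible factors, not a congruence transformation $\,^tGM'G\equiv M$. You cannot simply ``absorb'' $A$ into $E$: the only link between them is $E(MB+D)=I$ together with $\,^tAD\equiv I\ (q^e)$, which yields $E\equiv\,^tA$ only modulo $q^{\ell}$ (where $q^{\ell}\parallel M$), hence $EMA\equiv\,^tAMA$ only modulo $q^{2\ell}$, not modulo $q^e$. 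This is not a cosmetic point. If $E$ and $A$ were unrelated units the Legendre-symbol data in the definition would not be an invariant at all; and even granting the relation mod $q^{\ell}$, the residual freedom in $B$ is exactly what makes $\varepsilon_{\ell}$ an invariant only modulo $q^{\min(\ell,e-h)}$ rather than ``modulo $q^{e-\ell}$ up to unit squares,'' which is what your reduction to $\,^tGM'G\equiv M\ (q^e)$ would predict (already for $n=1$, $q^3\parallel\stufe$, $M=q\varepsilon$, these two equivalence relations on $\varepsilon$ are genuinely different partitions). Consequently the appeal to uniqueness of Jordan splittings --- a statement about honest $\Z_q$-congruence classes --- does not apply as you have set things up. The paper instead derives and exploits $E\equiv\,^tA\ (q^{\ell})$ blockwise ($E_{ii}\equiv\,^tA_{ii}\ (q^{\ell})$) to obtain $\left(\frac{\det M_i'}{q}\right)=\left(\frac{\det M_i}{q}\right)$.

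Two further points. The block divisibilities you propose to ``observe'' ($E_{ij},A_{ij}\equiv 0\ (q^{i-j})$ for $j<i$, which is what kills the off-diagonal contributions mod $q^e$ and gives $d_j'=d_j$) are not an observation but the content of an induction; the paper proves them by conjugating $E$ and $A$ by the scaling matrices $S_i$ and running a rank-counting argument, and you would need an equivalent of this rather than a citation of Jordan uniqueness. More seriously, your treatment of the bottom block is circular: running the normalizations of Proposition 4.2 ``in reverse'' only shows that the particular transformations used there do not identify two normalized values of $\varepsilon_{\ell}$, whereas uniqueness requires that \emph{no} element of the double coset does. The paper closes this with a separate argument you do not supply: comparing $\det(M_{\ell}\perp\cdots\perp M_h)$ with $\det(M_{\ell}'\perp\cdots\perp M_h')$ modulo $q^{\min(\ell,e-h)}$ using $\det(S_{h-\ell}^{-1}ES_{h-\ell})=1$ and $\det A\equiv 1\ (q^{\ell})$, and then dividing off the already-determined middle-block determinants. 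Without that step, and without the symplectic relation above, the claim that $\varepsilon_{\ell}$ is pinned down in its prescribed range is unproved.
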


\begin{proof} 
Since $\gamma_{_{M'}}\in\Gamma_{\infty}\gamma_{_M}\Gamma_0(\stufe)$, we that there are $E\in GL_n(\Z)$ and $\delta=\begin{pmatrix}A&B\\C&D\end{pmatrix}\in\Gamma_0(\stufe)$ 
so that $E(M\ I)\delta=(M'\ I)$.  Set $(M''\ I)=G_{\pm}(M'\ I)\gamma_{\pm}$.  
Since $M'$ is diagonal modulo $\stufe/2^{e'}$, we have $M''\equiv M'\ (\stufe/2^{e'})$;
when $e'\le 2$, we have $M''\equiv M'\ (2^{e'})$.
Thus replacing $\gamma_{_{M'}}$ by $\gamma_{\pm}\gamma_{_{M'}}\gamma_{\pm}$ if necessary, we can assume that $E(M\ I)\delta=(M'\ I)$ with $E\in SL_n(\Z)$.

For $e'\le 2$, Proposition 3.4 \cite{int wt} shows that $M'\equiv M\ (2^{e'})$.

So suppose that $q$ is an odd prime dividing $\stufe$ with $q^e\parallel\stufe$.
We have 
$$M\equiv M_0\perp qM_1\perp\cdots\perp q^eM_e\ (q^e)$$
with $M_i$ $d_i\times d_i$ for some $d_i$, and when $d_i>0$,
$M_i=\big<1,\ldots,1,\varepsilon_i\big>$ where $\varepsilon_i$ is as in the definition of a reduced representative modulo $q^e$.  Similarly,
$$M'\equiv M'_0\perp qM'_1\perp\cdots\perp q^eM'_e\ (q^e)$$
with $M_i$ $d_i\times d_i$ for some $d_i$, and when $d_i>0$,
$M'_i=\big<1,\ldots,1,\varepsilon'_i\big>$ where $\varepsilon'_i$ is as in the definition of a reduced representative modulo $q^e$.
Take $\ell$ minimal so that $d_{\ell}>0$, and take $h$ maximal so that $d_h>0$.  Note that by assumption, $M'\equiv EMA\ (q^e)$, where $E,A$ are necessarily invertible modulo $q$
Therefore $q^{\ell}\parallel M'$ as $q^{\ell}\parallel M$.
So for $0\le i<\ell$, we have $d_i'=0$, and $d_{\ell}'>0$.
We first want to show $d_i'=d_i$ for each $i$ with $\ell\le i\le e$.
 
If $\ell=e$ then, since $q^t|C$, we
have $M'\equiv M\equiv 0\ (q^e)$ so we are done.
So suppose $\ell<e$.  

Take $r=\min(h-\ell,e-1-\ell)$.
For $0\le i\le r$, take 
$$S_i=I_{d_{\ell}}\perp qI_{d_{\ell+1}}\perp\cdots\perp q^{i}I_{d_{\ell+i}}\perp q^{i}I\in\Z^{n,n}.$$
So $S_0=I$, and
$$q^{-\ell} S_i^{-1} M=M_{\ell}\perp M_{\ell+1}\perp\cdots\perp M_{\ell+i-1}
\perp q^{-\ell-i}\big( q^{\ell+i}M_{\ell+i}\perp\cdots\perp q^hM_h\big).$$

Suppose that $0\le i<r$, $S_i^{-1}ES_i$ is integral (hence invertible modulo $q$), and
$d_j'=d_j$ for $\ell\le j< \ell+i$.
We claim $d'_{\ell+i}=d_{\ell+i}$ and $S_{i+1}^{-1}ES_{i+1}$ is integral.
We have
\begin{align*}
d_{\ell}+d_{\ell+1}+\cdots+d_{\ell+i}&=\rank_q (q^{-\ell}S_i^{-1}M)\\
&=\rank_q(S_i^{-1}ES_i)(q^{-\ell}S_i^{-1}M)A.
\end{align*}
Since $S_i^{-1}ES_i$ and $A$ are integral and invertible modulo $q$, and 
$q^{-\ell}S_i^{-1}M$ is integral, we must have that 
$q^{-\ell}S_i^{-1}M'$
is integral.
Therefore
\begin{align*}
d_{\ell}+d_{\ell+1}+\cdots+d_{\ell+i}
&=\rank_q(q^{-\ell}S_i^{-1}M')\\
&=d_{\ell}+d_{\ell+1}+\cdots+d_{\ell+i-1}+d'_{\ell+i}.
\end{align*}
Hence $d_{\ell+i}=d'_{\ell+i}.$
Also, we have 
$$q^{-\ell}S_i^{-1}M=\begin{pmatrix} U_1\\&qU_2\end{pmatrix},\ q^{-\ell}S_i^{-1}M'=\begin{pmatrix} U'_1\\&qU'_2\end{pmatrix},$$
and
$$S_i^{-1}ES_i=\begin{pmatrix} E_1&E_2\\E_3&E_4\end{pmatrix},\ A=\begin{pmatrix} A_1&A_2\\A_3&A_4\end{pmatrix}$$
where $U_1, U_1', E_1, A_1$ are $(d_{\ell}+\cdots+d_{\ell+i})\times (d_{\ell}+\cdots+d_{\ell+i})$, and $U_1, U_1'$ are
invertible modulo $q$.  So (recalling that $EMA\equiv M'\ (q^e)$), we have
$$\begin{pmatrix} U_1'\\&0\end{pmatrix}\equiv\begin{pmatrix} E_1U_1A_1&E_1U_1A_2\\E_3U_1A_1&E_eU_1A_2\end{pmatrix}\ (q).$$
Hence $E_1, A_1$ are invertible modulo $q$ and $E_3\equiv0\ (q)$.  Thus with $c=d_{\ell}+\cdots+d_{\ell+i+1}$,
$$\begin{pmatrix} I_c\\&\frac{1}{q}I\end{pmatrix} S_i^{-1}ES_i\begin{pmatrix} I_c\\&qI\end{pmatrix}$$
is integral; that is, $S_{i+1}^{-1}ES_{i+1}$ is integral.

Hence by induction on $i$, we have that $d_{\ell+i}=d'_{\ell+i}$ for 
$0\le i\le r=\min(h-\ell,e-1-\ell)$, and
$S_r^{-1}ES_r$ is integral.  Since $M'\equiv \,^tAM\,^tE\ (q^t)$, the above argument also shows that $S_rAS_r^{-1}$ is integral.

With renewed notation, write $E=(E_{ij})$, $A=(A_{ij})$  where
$E_{ij},A_{ij}$ are $d_i\times d_j$ ($\ell\le i,j\le h$).  
Since $E,A$ are invertible modulo $q$
and $S_r^{-1}ES_r, S_rAS_r^{-1}$ are integral, we have $E_{ij}, A_{ij}\equiv0\ (q^{i-j})$ whenever $j<i<e$.  
Thus $E_{ii}$ and $A_{ii}$ are invertible modulo $q$ for all $i$.  
Hence for $\ell\le i<e$, we have
$$q^iM_i'\equiv \sum_{j=\ell}^h E_{ij}q^jM_jA_{ji}\ (q^e).$$
For $j<i<e$ we have $q^jE_{ij}M_jA_{ji}\equiv 0\ (q^{i-j})$, and so 
$M_i'\equiv E_{ii}A_iA_{ii}\ (q).$
Since $E(M\ I)\begin{pmatrix}A&B\\0&D\end{pmatrix}\equiv (M'\ I)\ (q^e)$, 
we have
$E(MB+D)\equiv I\ (q^e)$.
We have $q^{\ell}|M$, so $ED\equiv I\ (q^{\ell})$.
We also have $\,^tAD\equiv I\ (q^e)$, so $E\equiv\,^tA\ (q^{\ell})$.
Therefore $E_{ii}\equiv\,^tA_{ii}\ (q^{\ell})$ for all $i$.
Hence for all $i$, we have 
$$\left(\frac{\det M_i'}{q}\right)=\left(\frac{\det M_i}{q}\right).$$
Since $\begin{pmatrix}I&0\\M&I\end{pmatrix}$ and $\begin{pmatrix}I&0\\M'&I\end{pmatrix}$
are reduced representatives modulo $q^e$, this means that 
$q^i M'_i\equiv q^i M_i\ (q^e)$
for $\ell<i< e$ and 
$q^{\ell}M_{\ell}\equiv q^{\ell}M'_{\ell}\ (q^e)$ if $\ell=0$ or $\ell=e$ or $0<\ell<h=e$.

So now suppose that $0<\ell\le h<e$.  Thus with $m=\min(\ell,e-h)$, we have
$M_\ell\equiv\big<1,\ldots,1,\varepsilon_{\ell}\big>\ (q^{e-\ell})$ and 
$M'_\ell\equiv\big<1,\ldots,1,\varepsilon'_{\ell}\big>\ (q^{e-\ell})$ where $1\le \varepsilon_{\ell},\varepsilon'_{\ell}\le q^{m}$,
$q\nmid\varepsilon_{\ell},\varepsilon'_{\ell}$.
Since 
$$q^{-\ell}S_{h-\ell}^{-1}EMA=(S_h^{-1}ES_{h-\ell})(q^{-\ell}S_{h-\ell}^{-1}M)A,$$
we have
$$M_{\ell}'\perp\cdots\perp M_h'
\equiv(S_{h-\ell}^{-1}ES_{h-\ell})(M_{\ell}\perp\cdots\perp M_h)A\ (q^{e-h})$$
with $S_{h-\ell}^{-1}ES_{h-\ell}$ integral with determinant 1, and $\det A\equiv 1\ (q^{\ell})$.
Hence
$$\det(M_{\ell}'\perp\cdots\perp M_h')\equiv \det(M_{\ell}\perp\cdots\perp M_h)\ (q^m).$$
We have seen that for $\ell<i\le h$ (with $d_i>0$), we have
$q^i M'_i\equiv q^i M_i\ (q^e)$, and hence $\det M'_i\equiv \det M_i\ (q^{e-i})$.  Thus $\det M'_i\equiv \det M_i\ (q^m)$ for $\ell<i\le h$.  Consequently, since $\det M_i$ is a unit modulo $q$ when $d_i>0$, we have $\det M'_{\ell}\equiv \det M_{\ell}\ (q^m)$.  Hence
$M_{\ell}'\equiv M_{\ell}\ (q^m)$, and so 
$M'\equiv M\ (q^e)$.

As this holds for all primes $q|\stufe$ with $q^e\parallel\stufe$, we have $M'\equiv M\ (q^e)$.
Thus 
$\gamma_{_{M'}}^{-1}\gamma_{_M}\equiv I\ (\stufe)$, and hence $\gamma_{_{M'}}\in\gamma_{_M}\Gamma(\stufe)$.
\end{proof}

\bigskip
\section{Evaluating average theta series at the cusps}

As noted earlier, in \cite{S35} Siegel showed that the  value of the average theta series at any 0-dimensional cusp is given by a generalized Gauss sum.  Here we first review that result, using the representatives for the 0-dimensional cusps that we described earlier.  Then we unwind the generalized Gauss sum to realize it explicitly in terms of powers of primes, Legendre symbols, and eighth roots of unity.

\begin{prop}  Suppose that 
$L=\Z x_1\oplus\cdots\oplus\Z x_m$ with $m=2k$, $k\in\Z_+$.  Also suppose that 
$Q\in\Z^{m,m}_{\sym}$ is the matrix for a positive definite, even integral quadratic form on $L$ relative to the given basis for $L$; let $\stufe$ be the level of $Q$.
Take $\begin{pmatrix}I&0\\-M&I\end{pmatrix}\in Sp_n(\Z)$.  Then 
we have
$$\lim_{\tau\to i\infty}\theta^{(n)}(L;\tau)|\begin{pmatrix}I&0\\-M&I\end{pmatrix}
=\prod_{\substack{q^e\parallel\stufe\\q\,\text{prime}}} a_q(L,M)$$
where, with $q$ prime and $q^e\parallel\stufe$,
$$a_q(L,M)=q^{-emn}\sum_{V\in\Z^{m,n}/q^{e}\Z^{m,n}} \e\{\stufe^2 Q^{-1}(V)M/q^{2e}\}.$$

\end{prop}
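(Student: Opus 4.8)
The plan is to run the theta-transformation argument (due to Siegel) explicitly via Poisson summation. Abbreviate $\gamma=\begin{pmatrix}I&0\\-M&I\end{pmatrix}$ and, for $\tau\in\h_{(n)}$, set $\sigma=\tau(I-M\tau)^{-1}$; then by definition of the slash action
$$\theta^{(n)}(L;\tau)|\gamma=\det(I-M\tau)^{-k}\sum_{U\in\Z^{m,n}}\e\{Q(U)\sigma\}.$$
The summand is a Gaussian in the $mn$ entries of $U$ whose quadratic form has matrix $\sigma\otimes Q$ (up to the order of the Kronecker factors), so Poisson summation over $\Z^{m,n}$ converts this into $\det(-i\sigma)^{-k}(\det Q)^{-n/2}\sum_{V\in\Z^{m,n}}\e\{-Q^{-1}(V)\sigma^{-1}\}$, both series converging absolutely since $\sigma,-\sigma^{-1}\in\h_{(n)}$; as $m=2k$ is even, no branch ambiguity arises in any of these determinant factors. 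The algebraic identity that drives the whole argument is
$$\sigma^{-1}=(I-M\tau)\tau^{-1}=\tau^{-1}-M,\qquad\text{hence}\qquad\e\{-Q^{-1}(V)\sigma^{-1}\}=\e\{-Q^{-1}(V)\tau^{-1}\}\cdot\e\{Q^{-1}(V)M\}.$$

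Next I would observe that $\theta^{(n)}(L;\tau)|\gamma$ is holomorphic and invariant under $\tau\mapsto\tau+S$ for every $S\in\stufe\Z^{n,n}_{\sym}$ (since $\gamma\begin{pmatrix}I&S\\0&I\end{pmatrix}\gamma^{-1}\in\Gamma(\stufe)$ and $\chi_L$ is trivial there), so it has a Fourier expansion whose constant term is the limit in the statement and may be computed along $\tau=itI$, $t\to+\infty$. On this ray $\det(-i\sigma)=t^{n}\det(I-M\tau)^{-1}$, so the full prefactor collapses to $t^{-mn/2}(\det Q)^{-n/2}$. Because $\stufe Q^{-1}$ is even integral, a short trace computation (using $\Tr({}^tAQ^{-1}BM)=\Tr({}^tBQ^{-1}AM)$ and the even diagonal of $\stufe Q^{-1}$) shows $\e\{Q^{-1}(V)M\}$ depends only on $V$ modulo $\stufe$, so grouping the dual sum gives
$$\theta^{(n)}(L;itI)|\gamma=t^{-mn/2}(\det Q)^{-n/2}\!\!\sum_{V_0\in\Z^{m,n}/\stufe\Z^{m,n}}\!\!\e\{Q^{-1}(V_0)M\}\sum_{W\in\Z^{m,n}}\exp\!\Bigl(-\tfrac{\pi}{t}\Tr Q^{-1}(V_0+\stufe W)\Bigr).$$
The inner sum over $W$ is a shifted theta value at parameter $s=\stufe^{2}/t\to0^{+}$, so by the standard small-$s$ asymptotic (a second Poisson summation followed by dominated convergence on the transformed series) it is asymptotic to $s^{-mn/2}(\det Q)^{n/2}$ \emph{uniformly in $V_0$}; since the $V_0$-sum is finite, passing to the limit termwise yields
$$\lim_{\tau\to i\infty}\theta^{(n)}(L;\tau)|\gamma=\stufe^{-mn}\sum_{V_0\in\Z^{m,n}/\stufe\Z^{m,n}}\e\{Q^{-1}(V_0)M\}.$$

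It then remains to factor this Gauss sum over the primes dividing $\stufe$. Writing $\stufe=\prod q^{e}$, $N_q=\stufe/q^{e}$ and choosing $b_q$ with $b_qN_q\equiv1\ (q^{e})$, substitute $V_0=\sum_q b_qN_qV_q$ as $(V_q)$ runs over $\prod_q\Z^{m,n}/q^{e}\Z^{m,n}$. In $\Tr(Q^{-1}(V_0)M)$ the off-diagonal cross terms occur in transpose-pairs and, since $\stufe\mid N_qN_{q'}$ for $q\ne q'$ and $\stufe Q^{-1}$ is even integral, contribute $0$ modulo $2$, while the $q$-diagonal term contributes $\Tr(b_q^{2}N_q^{2}Q^{-1}(V_q)M)$; hence $\e\{Q^{-1}(V_0)M\}=\prod_q\e\{b_q^{2}N_q^{2}Q^{-1}(V_q)M\}$. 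Reindexing each $V_q$ by the unit scalar $b_q$ and using $N_q^{2}=\stufe^{2}/q^{2e}$ and $\stufe^{-mn}=\prod_q q^{-emn}$ turns the sum into $\prod_{q^{e}\parallel\stufe}\bigl(q^{-emn}\sum_{V\in\Z^{m,n}/q^{e}\Z^{m,n}}\e\{\stufe^{2}Q^{-1}(V)M/q^{2e}\}\bigr)=\prod_{q^{e}\parallel\stufe}a_q(L,M)$.

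The difficulty I expect is organizational rather than conceptual: keeping the two Poisson summations and all the (branch-free, since $m$ is even) determinant normalizations consistent, and carrying out the Chinese Remainder bookkeeping so that the powers $q^{-emn}$ and the shift $\stufe^{2}/q^{2e}$ come out exactly. One lemma recurs three times — that certain traces against $M$ are even because $\stufe Q^{-1}$ is even integral — and is worth isolating at the start; it is also what makes the Gauss sums above well defined. Alternatively one could cite \cite{S35} for the identity $\lim=\stufe^{-mn}\sum_{V_0}\e\{Q^{-1}(V_0)M\}$ and supply only the final factorization.
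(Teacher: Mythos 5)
Your argument is correct and follows essentially the same route as the paper: one application of theta inversion (Poisson summation) with the key identity $\sigma^{-1}=\tau^{-1}-M$, a regrouping of the dual sum modulo $\stufe$ justified by $\stufe Q^{-1}$ being even integral, a second inversion to evaluate the limit, and the Chinese Remainder factorization killing cross terms for the same parity reason. The only cosmetic differences are that the paper performs the second inversion exactly (via shifted theta series whose limits are $0$ or $1$) rather than via small-parameter asymptotics along the ray $\tau=itI$, and it decomposes $V_0$ as $\sum_i\stufe_iV_i$ directly rather than inserting and then reindexing away the units $b_q$.
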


\begin{proof}
We will use the Inversion Formula (Lemma 1.3.15 \cite{And}), which says the following.
With $U_0\in\Q^{m,n}$ and
$$\theta^{(n)}(L,U_0;\tau)=\sum_{U\in\Z^{m,n}}\e\{Q(U+U_0)\tau\},$$
we have
\begin{align*}
\theta^{(n)}(L,U_0;\tau)
&=(\det Q)^{-n/2}(\det(-i\tau))^{-m/2}
\sum_{U'\in\Z^{m,n}}\e\{-Q^{-1}(U')\tau^{-1}-2\,^tU'U_0\}.
\end{align*}

Take $\begin{pmatrix}I&0\\-M&I\end{pmatrix}\in Sp_n(\Z)$.  Then 
applying the Inversion Formula we have
\begin{align*}
&\theta^{(n)}(L;\tau(-M\tau+I)^{-1})\\
&\quad=
(\det Q)^{-n/2}(\det(-i\tau(-M\tau+I)^{-1}))^{-m/2}\\
&\qquad\cdot
\sum_{U\in\Z^{m,n}} \e\{-Q^{-1}(U)(-M+\tau^{-1})\}\\
&\quad=
(\det Q)^{-n/2}(\det(-i\tau(-M\tau+I)^{-1}))^{-m/2}\\
&\qquad\sum_{U_0\in\Z^{m,n}/\stufe\Z^{m,n}}\e\{Q^{-1}(U_0)M\}
\theta^{(n)}(\stufe^2Q^{-1},\stufe^{-1}U_0;-\tau^{-1}).
\end{align*}
Applying the Inversion Formula again, we get
\begin{align*}
&\theta^{(n)}(L;\tau(-M\tau+I)^{-1})\\
&\quad=
\stufe^{-mn}\det(-M\tau+I)^k
\sum_{U_0, U_1\in\Z^{m,n}/\stufe\Z^{m,n}}\e\{Q^{-1}(U_0)M-2\stufe^{-1}\,^tU_0U_1\}\\
&\qquad \cdot\sum_{U\in\Z^{m,n}}\e\{Q(\stufe^{-1}U_1+U)\tau\}.
\end{align*}

Now we consider
\begin{align*}
&\lim_{\tau\to i\infty}\theta^{(n)}(L;\tau)|\begin{pmatrix}I&0\\-M&I\end{pmatrix}\\
&\quad=\stufe^{-mn}\sum_{U_0, U_1\,(\stufe)}\e\{Q^{-1}(U_0)M-2\stufe^{-1}\,^tU_0U_1\}\\
&\qquad \cdot \lim_{\tau\to i\infty}\sum_{U\in\Z^{m,n}}\e\{Q(\stufe^{-1}U_1+U)\tau\}.
\end{align*}
We have
$$\lim_{\tau\to i\infty}\sum_{U\in\Z^{m,n}}\e\{Q(\stufe^{-1}U_1+U)\tau\}
=\begin{cases}1&\text{if $U_1\in\stufe\Z^{m,n}$}\\0&\text{otherwise.}
\end{cases}$$
Hence
$$\lim_{\tau\to i\infty}\theta^{(n)}(L;\tau)|\begin{pmatrix}I&0\\-M&I\end{pmatrix}=\stufe^{-mn}\sum_{U_0\,(\stufe)}\e\{Q^{-1}(U_0)M\}.$$

Write $\stufe=q_1^{e_1}\cdots q_s^{e_s}$ where $q_1,\ldots,q_s$ are the distinct primes dividing $\stufe$, and set $\stufe_i=\stufe/q_i^{e_i}$. 
Let $\calL=\Z^{m,n}$ (an additive group).
One easily verifies that the map
$$(U_1+\stufe\calL,\ldots,U_s+\stufe\calL)\mapsto
U_1+\cdots+U_s+\stufe\calL$$
defines an isomorphism from 
$\stufe_1\calL\oplus\cdots\oplus\stufe_s\calL$ onto $\calL/\stufe\calL$.
Also, for $U_i=\stufe_iV_i\in\stufe_i\calL$ ($1\le i\le s$), since $\stufe Q^{-1}$ is even integral we have
$$Q^{-1}(U_1+\cdots+U_s)
\equiv\sum_{i=1}^s Q^{-1}(U_i)\ (2\Z).$$
Hence
$$\sum_{U\in\calL/\stufe\calL}\e\{Q^{-1}(U)M\}
=\prod_{i=1}^s\left(\sum_{V_i\in\calL/q_i^{e_i}\calL}\e\{(\stufe_i)^2 Q^{-1}(V_i)M\}\right).$$
\end{proof}

Next we use the local structure of $Q$ over $\Z_q$ for a prime $q|\stufe$ to simplify the sum defining $a_q(L,M)$, describing it in terms of invariants of $\Z_qL$, $M$ modulo $q^e$, and 
certain generalized Gauss sums, defined as follows.

\smallskip\noindent
{\bf Definition.}  Suppose that $q$ is prime, and $r,d,h\in\Z_+$.  Take $J'\in\Z^{r,r}_{\sym}$ and $M'\in\Z^{d,d}_{\sym}$ so that $J'$ and $M'$ are invertible modulo $q$, and $2|J'$ when $q\not=2$.  Set
$$\G_{J',M'}(q^h)=\sum_{x\in\Z^{r,d}/q^h\Z^{r,e}} \e\{J'(x)M'/q^h\}.$$
For $x,y\in\Z^{r,d}$, one easily verifies that $\e\{J'(x+q^hy)M'/q^h\}=\e\{J'(x)M'/q^h\},$ and hence $\G_{J',M'}(q^h)$ is well-defined.
Note that for $E\in SL_r(\Z)$ and $G\in SL_d(\Z)$, $EUG$ varies over $\Z^{r,d}/q^h\Z^{r,d}$ as $U$ does; hence with $J''=^tEJ'E$ and $M''=GM'\,^tG$, we have
$\G_{J'',M''}(q^h)=\G_{J',M'}(q^h).$  Also, $\G_{J',M'}(q^h)=G_{M',J'}(q^h).$
\smallskip

\begin{prop}  Suppose that $L=\Z x_1\oplus\cdots\Z x_m$ is equipped with an even integral quadratic form represented by $Q\in\Z^{m,m}_{\sym}$ relative to the given basis for $L$.
Let $\stufe$ be the level of $Q$, and suppose that $q$ is a prime with $q^e\parallel\stufe$ where $e\in\Z_+$.  

\begin{enumerate}
\item[(a)] 
There is some $G\in SL_m(\Z_q)$ so that
$$GQ\,^tG\equiv J_0\perp qJ_1\perp\cdots q^eJ_e\ (q^{e+2})$$
where each $J_c$ is $r_c\times r_c$ for some $r_c$.  Further, when $q\not=2$ and $r_c>0$, 
$$J_c=2\big<1,\ldots,1,\nu_c\big>$$
with $q\nmid\nu_c$;
when $q=2$ and $r_c>0$,
$$J_c=\big<\mu_1,\ldots,\mu_{r_c}\big> \text{ or } 
H^{d_c/2} \text{ or } H^{d_c/2-1}\perp A_c$$
where $\mu_1,\ldots,\mu_{r_c}\in\{1,3,5,7\}$, $H=\begin{pmatrix}0&1\\1&0\end{pmatrix}$,
and $A_c=\begin{pmatrix}2a'_c&a_c\\a_c&2a'_ca_c^2\end{pmatrix}$
with $a'_c=0$ or 1, and $a_c$ odd.
Also, when $q=2$ and $r_0>0$, $J_0$ is even integral.
Further, when $q\not=2$, $r_e>0$;
when $q=2$, $J_0$ is even integral, and either $r_e>0$ and $J_e$ is even integral, or $r_e=0$ and  $r_{e-1}>0$ with $J_{e-1}$ diagonal.

\item[(b)]  Take $M\in\Z^{n,n}_{\sym}$ so that
$$M=M_0\perp qM_1\perp\cdots\perp q^eM_e$$
with each $M_j$ $d_j\times d_j$, and $M_j$ is invertible modulo $q$ when $d_j>0$.
Then 

\begin{align*}
&\sum_{V\in\Z^{m,n}/q^e\Z^{m,n}}\e\{\stufe^2 Q^{-1}(V)M/q^{2e}\}\\
&\quad
=q^{mne}\prod_{c=1}^e\prod_{j=0}^{c-1}q^{r_cd_j(j-c)}\G_{J_c',M_j}(q^{c-j})
\end{align*}
where, for each $c$ so that $r_c>0$, 
$J_c'=\begin{pmatrix}I\\&u_c\end{pmatrix}J_c\begin{pmatrix}I\\&u_c\end{pmatrix}$
for some $u_c$, $q\nmid u_c$
\end{enumerate}
\end{prop}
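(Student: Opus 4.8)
The plan is as follows. For part (a) I would invoke the classical theory of Jordan splittings over $\Z_q$: \S91 of \cite{O'M} (or \cite{Ger}) for $q$ odd and \S93 for $q=2$ produce $G_0\in GL_m(\Z_q)$ with $G_0Q\,{}^tG_0$ an orthogonal sum of $q^c$-modular blocks; absorbing $\det G_0$ into a diagonal twist on one coordinate I may take $G\in SL_m(\Z_q)$, and since $SL_m(\Z)$ is dense in $SL_m(\Z_q)$ I may realise the congruence modulo $q^{e+2}$ with $G\in SL_m(\Z)$. For $q$ odd each block is $\Z_q$-equivalent to $2\langle 1,\dots,1,\nu_c\rangle$ (as $2$ is a unit and only the determinant class is an invariant); for $q=2$ I would quote the three admissible unimodular types. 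The one point that genuinely needs argument is that $GQ\,{}^tG$ is again \emph{even integral}, which holds because ${}^tG\Z_q^m=\Z_q^m$, so ``$Q(x)\in 2\Z$ for all $x$'' is preserved; reading off the diagonal then forces $J_0$ even. Finally I would translate ``$\stufe$ is the level of $Q$'' into ``$q^e$ is the least power of $q$ with $q^eQ^{-1}$ even integral at $q$'' and use $Q^{-1}\cong_{\Z_q}\bigoplus_c q^{-c}J_c^{-1}$: identifying the highest occurring scale and testing whether the corresponding inverse block is even yields, for $q$ odd, $r_e>0$, and for $q=2$ the stated dichotomy ($r_e>0$ with $J_e$ even, or $r_e=0$, $r_{e-1}>0$ with $J_{e-1}$ a Type I, hence diagonal, unimodular $\Z_2$-form).

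For part (b) the key device is to rewrite the exponent through the \emph{even integral} matrix $R:=\stufe Q^{-1}$: since $\stufe=q^em'$ with $q\nmid m'$, one has $\stufe^2Q^{-1}/q^{2e}=\tfrac{m'}{q^e}R$, so the sum in question equals $\sum_{V\bmod q^e}\e\{\tfrac{m'}{q^e}R(V)M\}$; since $R$ is even integral, $R(V)$ has even diagonal, so $\Tr[R(V)M]$ is even, which is exactly what makes $\e\{\,\cdot\,\}$ well defined modulo $q^e$ and what will make the individual block sums below well defined. I would then Jordan-split $R$ over $\Z_q$, choosing $\widetilde G\in SL_m(\Z)$ with $\widetilde GR\,{}^t\widetilde G\equiv\bigoplus_s q^sR_s\pmod{q^N}$ for $N$ large and the $R_s$ unimodular; since $R$ is even integral, every $q^sR_s$ has even diagonal, so this orthogonal sum is even integral and replacing $\widetilde GR\,{}^t\widetilde G$ by it alters the exponent only by an element of $2\Z$. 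Substituting $W={}^t\widetilde G^{-1}V$ (a bijection of $\Z^{m,n}/q^e\Z^{m,n}$ as $q\nmid\det\widetilde G$) and splitting $W$ into blocks $W^{(s)}_j$ of size $r_s\times d_j$ by the scale $s$ of $R$ and the scale $j$ of $M=\bigoplus_j q^jM_j$, the trace becomes $\sum_{s,j}m'q^{s+j-e}\Tr[R_s(W^{(s)}_j)M_j]$, so the sum factors as $\prod_{s,j}T_{s,j}$ with $T_{s,j}=\sum_{W\bmod q^e}\e\{m'q^{s+j-e}R_s(W)M_j\}$.

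It then remains to evaluate the $T_{s,j}$ and to reindex. For $s+j\ge e$ the summand is identically $1$ (the argument lies in $2\Z$, using that $\Tr[R_s(W)M_j]$ is even), giving $T_{s,j}=q^{r_sd_je}$. For $s+j<e$ the summand depends only on $W\bmod q^{\,e-s-j}$ -- again because $\Tr[R_s(W)M_j]$ is even, so $\tfrac{m'}{q^{\,e-s-j}}R_s(W)M_j$ is a clean additive character modulo $q^{\,e-s-j}$ with no stray factor of $2$ -- whence $T_{s,j}=q^{r_sd_j(s+j)}\,\G_{m'R_s,M_j}(q^{\,e-s-j})$. Writing $s=e-c$ one has $e-s-j=c-j$, the scale-$s$ component of $R$ has rank $r_c$, and it is $\Z_q$-equivalent to $m'J_c^{-1}$; matching determinant square-classes (automatic since $(m')^2$ is a square) and, for $q$ odd, using that the unit factors $\tfrac12$ and $2$ are interchangeable over $\Z_q$, one produces a unit $u_c$ for which $m'R_s$ is equivalent over $\Z_q$ to $J_c'=\begin{pmatrix}I\\&u_c\end{pmatrix}J_c\begin{pmatrix}I\\&u_c\end{pmatrix}$, so that $\G_{m'R_s,M_j}(q^{\,c-j})=\G_{J_c',M_j}(q^{\,c-j})$ by the invariance of $\G$ under $B\mapsto{}^tEBE$ noted after its definition. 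Collecting powers, the $s+j\ge e$ factors contribute $q^{\,e\sum_c r_c\sum_{j\ge c}d_j}$ and the power parts of the $s+j<e$ factors contribute $q^{\sum_c\sum_{j<c}r_cd_j(e-c+j)}$; since $\sum_c r_c=m$ and $\sum_j d_j=n$, these combine to $q^{mne}\prod_{c=1}^e\prod_{j=0}^{c-1}q^{r_cd_j(j-c)}$, which with the surviving $\G$-factors is exactly the claimed identity. The case $q=2$ would run the same way with the $2$-adic Jordan theory of \S93 \cite{O'M}, the additional care being that the positive-scale components $R_s$ of $R$ need not be even (only the scalings $2^sR_s$ are), which is the source of the several cases treated in Proposition~5.5.

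I expect the main obstacle to be not the combinatorics of the splitting but the $q$-adic precision bookkeeping that justifies the substitution and the replacement of $R$ by its Jordan form at the level of $\e\{\,\cdot\,\}$ -- i.e.\ verifying that all error terms introduced land in $2\Z$ -- together with pinning down the unit $u_c$ in the definition of $J_c'$; this is precisely where the even integrality of $R=\stufe Q^{-1}$, and of the scaled Jordan components $q^sR_s$, is used over and over.
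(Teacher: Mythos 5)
Part (a) of your proposal and the overall factorization strategy in (b) are essentially the paper's: Jordan-split, pass to $SL$ via a diagonal unit twist, approximate $q$-adically by an integral matrix, substitute, and let the sum factor over the blocks of $Q^{-1}$ and of $M$; your power bookkeeping ($q^{r_cd_j(e-c+j)}$ for $j<c$, recombining to $q^{mne}\prod q^{r_cd_j(j-c)}$) agrees with the paper's. The genuine divergence is how the blocks $J_c'$ are produced, and this is where your version has a real gap. The paper does \emph{not} re-split $\stufe Q^{-1}$ and then identify its components abstractly; it sets $G''=(\stufe' G')^{-1}(J_0\perp\cdots\perp J_e)\bigl(\begin{smallmatrix}I&\\&u\end{smallmatrix}\bigr)$ with $G'$ the matrix already used in (a), so that conjugating $Q^{-1}$ by $G''$ \emph{manifestly} yields $q^{e}J_0\perp q^{e-1}J_1\perp\cdots\perp J_e$ up to the single twist by $u$ --- the blocks are the $J_c$ themselves by construction, with no classification needed. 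Your route instead argues that the scale-$(e-c)$ component of $R=\stufe Q^{-1}$ is $\Z_q$-equivalent to $m'J_c^{-1}$ and then to $J_c'$ ``by matching determinant square-classes.'' For $q$ odd that is fine, but for $q=2$ rank and determinant square class do \emph{not} classify unimodular $\Z_2$-forms (e.g.\ $\big<1,1\big>\not\cong\big<3,3\big>$ although both have determinant in the class of $1$: the latter does not represent $1$ over $\Z_2$), and the statement needs $J_c'$ to equal $\bigl(\begin{smallmatrix}I&\\&u_c\end{smallmatrix}\bigr)J_c\bigl(\begin{smallmatrix}I&\\&u_c\end{smallmatrix}\bigr)$ on the nose, since Proposition 5.5 then reads the Gauss sum off the mod-$4$ structure of $J_c$. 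You would have to invoke the full classification (type, determinant, and oddity) to show $J_c^{-1}\cong J_c$ at $2$, or --- much better --- adopt the paper's device, which sidesteps the identification entirely.

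One further caution, which applies equally to the paper's own derivation and so is not a divergence, but your parenthetical justification makes the weak point explicit: you claim the factors with $s+j\ge e$ are identically $q^{r_sd_je}$ ``using that $\Tr[R_s(W)M_j]$ is even.'' For $q=2$ and $s+j=e$ exactly, $R_s$ need not be even integral (only $2^sR_s$ is), and then $\Tr[R_s(W)M_j]$ can be odd, so the summand is $\pm1$ rather than $1$. Concretely, for $Q=2I_m$ (level $4$, $e=2$, $J_1=I_m$) and $M=2M_1$ with $M_1=(1)$, the $(s,j)=(1,1)$ factor is $\sum_{W}(-1)^{\Tr[{}^tWWM_1]}=0$, not $2^{2m}$. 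Your argument is airtight for $q$ odd and for $s+j>e$; the case $q=2$, $j=c$ with $J_c'$ of odd type needs to be excluded or treated separately, and you should not present the evenness of $\Tr[R_s(W)M_j]$ as automatic there.
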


\begin{proof}  
(a) Fix a prime $q$ with $q^e\parallel\stufe$.
By \S93 of \cite{O'M}, or equivalently Theorems 8.5 and 8.9 of \cite{Ger}, we know that there is some $G'\in SL_m(\Z_q)$ so that 
$$^tG'QG'=J_0\perp qJ_1\perp\cdots q^eJ_e$$
where each $J_c$ is as in the statement of the proposition; in particular, each $J_c$ is invertible modulo $q$.  
Note that since $Q$ is even integral, when $q=2$ and $r_0>0$, we have that $J_0$ is even integral; also, when $q=2$ and $r_e>0$, we have that $J_e$ is even integral since $\stufe Q^{-1}$ is even integral.
Taking $G\in SL_m(\Z)$ so that $G\equiv G'\ (q^{e+1})$, we get
$$^tGQG\equiv J_0\perp qJ_1\perp\cdots q^eJ_e\ (q^{e+2}).$$

(b)  Take $G'$ as in (a).
Then
$$(\,^tG'QG')^{-1}=J_0^{-1}\perp\cdots\perp q^{-e}J_e^{-1}.$$
Take $\stufe'=\stufe/q^e$ and $u\in\Z_q$ so that
$$G''=(\stufe' G')^{-1}(J_0\perp J_1\perp\cdots\perp J_e)
\begin{pmatrix}I\\&u\end{pmatrix}\in SL_m(\Z_q)$$ 
(recall that each $J_i$  is invertible over $\Z_q$ whenever $r_i>0$).
Take $E\in SL_m(\Z)$ so that $E\equiv G''\ (q^e)$.  Thus
$q^e(\stufe')^2\,^tEQ^{-1}E\equiv Q'\ (q^e)$ where
$$Q'=q^eJ_0'\perp q^{e-1}J_1'\perp\cdots\perp J_e'\ (q^{e+1})$$
and, for each $c$ so that $r_c>0$, either $J_c'=J_c$ or
$J_c'=\begin{pmatrix}I\\&u\end{pmatrix}J_c\begin{pmatrix}I\\&u\end{pmatrix}.$
Since $EV$ varies over $\Z^{m,n}/q^e\Z^{m,n}$ as $V$ does, we have
\begin{align*}
&\sum_{V\in\Z^{m,n}/q^e\Z^{m,n}}\e\{\stufe^2Q^{-1}(V)M/q^{2e}\}\\
&\quad =
\sum_{V\in\Z^{m,n}/q^e\Z^{m,n}}\e\{Q'(V)M/q^e\}\\
&\quad=
\prod_{c=0}^e\prod_{j=0}^e\sum_{x\in\Z^{r_c,d_j}/q^e\Z^{r_c,d_j}}
\e\{J'_c(x)M_j/q^{c-j}\}\\
&\quad=
q^{r_0e(d_0+\cdots+d_e)}
\prod_{c=1}^e q^{r_ce(d_c+\cdots+d_e)}
\prod_{j=0}^{c-1}q^{r_cd_j(e-c+j)}\G_{G'_c,M_j}(q^{c-j})\\
&\quad=
q^{mne}\prod_{c=1}^e\prod_{j=0}^{c-1}q^{r_cd_j(j-c)}\G_{J'c,M_j}(q^{c-j})
\end{align*}
where, for the last equality, we used that $d_0+\cdots+d_e=n$ and $r_0+\cdots+r_e=m$.
\end{proof}

We now evaluate the Gauss sums that appear in the above proposition.
We first use a standard argument to reduce the modulus of the Gauss sum.

\begin{prop}  Let $q$ be a prime, $r,d\in\Z_+$, $J'\in\Z^{r,r}_{\sym}$, $M'\in\Z^{d,d}_{\sym}$ ($r,d>0$) so that $J', M'$ are invertible modulo $q$.
Then for $h\ge 2$, we have
$$\G_{J',M'}(q^h)=
\begin{cases}
q^{rdh/2}&\text{if $2|h$,}\\
q^{rd(h-1)/2}\G_{J',M'}(q)&\text{otherwise.}
\end{cases}$$
\end{prop}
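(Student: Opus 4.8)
The plan is to run the classical stationary-phase / level-lowering argument for Gauss sums in this matrix setting. Write $x \in \Z^{r,d}/q^h\Z^{r,d}$ as $x = y + q^{h-1}z$ with $y$ running over $\Z^{r,d}/q^{h-1}\Z^{r,d}$ and $z$ over $\Z^{r,d}/q\Z^{r,d}$. Expanding $J'(x) = {}^t x J' x$ and using that $h-1 \ge 1$ so that cross terms $q^{h-1}\cdot({}^tyJ'z + {}^tzJ'y)$ contribute $\e\{({}^tzJ'y + {}^tyJ'z)M'/q\}$ while the $q^{2(h-1)}$ term vanishes mod $q^h$ (here $2(h-1)\ge h$), one gets
$$\G_{J',M'}(q^h) = \sum_{y} \e\{J'(y)M'/q^h\}\sum_{z} \e\{2\,{}^tzJ'yM'/q\}.$$
The inner sum over $z \in \Z^{r,d}/q\Z^{r,d}$ is an orthogonality relation: since $J'$ and $M'$ are invertible modulo $q$ (and $2$ is a unit mod $q$ when $q$ is odd, while for $q=2$ the factor $2$ must be absorbed — see below), the linear form $z \mapsto \Tr({}^tzJ'yM')$ over $\F_q$ is identically zero iff $y \equiv 0 \ (q)$, and otherwise the sum is $0$. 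Hence the inner sum is $q^{rd}$ when $q \mid y$ and $0$ otherwise, so only $y = qy'$ with $y' \in \Z^{r,d}/q^{h-2}\Z^{r,d}$ survive, giving
$$\G_{J',M'}(q^h) = q^{rd}\sum_{y' \in \Z^{r,d}/q^{h-2}\Z^{r,d}} \e\{J'(qy')M'/q^h\} = q^{rd}\,\G_{J',M'}(q^{h-2}),$$
since $J'(qy') = q^2 J'(y')$. Then induct on $h$: the base cases are $h = 0$ (the sum is a single term, equal to $1$, matching $q^{0}$) and $h = 1$ (which gives $\G_{J',M'}(q)$ directly), and the recursion $\G(q^h) = q^{rd}\G(q^{h-2})$ yields $\G(q^h) = q^{rd\lfloor h/2\rfloor}\G(q^{h - 2\lfloor h/2\rfloor})$, i.e. $q^{rdh/2}$ for even $h$ and $q^{rd(h-1)/2}\G_{J',M'}(q)$ for odd $h$.

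The one subtlety — and the step I expect to need the most care — is the case $q = 2$, because then the cross term carries a factor of $2$ and the completion of the square is less clean: writing $J'(x) = {}^tx J'x$ with $J'$ symmetric, the identity ${}^t(y+q^{h-1}z)J'(y+q^{h-1}z) = {}^tyJ'y + 2q^{h-1}\,{}^tzJ'y + q^{2(h-1)}\,{}^tzJ'z$ still holds, and modulo $q^h = 2^h$ the last term drops (as $2(h-1)\ge h$ for $h\ge 2$) but the middle term contributes $\e\{2^h \cdot 2^{-h}\cdot 2\,{}^tzJ'yM'\cdot 2^{-1}\}$ — wait, more carefully: the exponent is $2q^{h-1}\,{}^tzJ'yM'/q^h = {}^tzJ'yM'/q^{h-2}$ when $q=2$, so for $h\ge 2$ this is $\e\{{}^tzJ'yM'/2^{h-2}\}$, which for $h=2$ is trivial and the argument must be organized slightly differently. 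The standard fix is to note that for $q=2$ and $J'$ of the standard even/diagonal shape, $J'(y + 2^{h-1}z) \equiv J'(y) + 2^{h-1}(\text{something})$ mod $2^h$ where the ``something'' is a $\Z/2$-linear form in $z$ that is nondegenerate when $y$ is a unit vector, so the same orthogonality kills all $y \not\equiv 0\ (2)$; one then again reduces to $y = 2y'$ and gets $\G(2^h) = 2^{rd}\G(2^{h-2})$. I would first write the odd-$q$ case in full, then remark that for $q=2$ one splits $x = y + 2^{h-1}z$ and expands $J'(x)$ directly for each of the three normal forms ($\langle \mu_1,\dots\rangle$, $H^{\ast}$, $H^{\ast}\perp A$), verifying in each case that the $z$-sum is $2^{rd}\cdot[y\equiv 0\ (2)]$; since $J'$ and $M'$ are invertible mod $2$ this is a short computation. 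The rest is the clean induction above, which is identical for all $q$.
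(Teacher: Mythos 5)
Your proposal is, in structure, exactly the paper's proof: split $x=y+q^{h-1}z$, kill all $y$ with $q\nmid y$ by orthogonality of the character sum over $z$, obtain the recursion $\G_{J',M'}(q^h)=q^{rd}\,\G_{J',M'}(q^{h-2})$, and unwind to $h=0$ or $h=1$. That core is fine, but your worry about the cross term at $q=2$ is misplaced: with the paper's normalization $\e\{*\}=\exp(\pi i\Tr(*))$, the two cross terms $^tyJ'z$ and $^tzJ'y$ have equal trace, so together they contribute $\exp\bigl(2\pi i\Tr(^tzJ'yM')/q\bigr)$ --- a genuine additive character modulo $q$ for every prime, including $q=2$ --- and orthogonality gives $q^{rd}$ when $q\mid y$ and $0$ otherwise, with no case distinction and no factor of $2$ to absorb. (Also, $2q^{h-1}/q^h=2/q$, not $q^{-(h-2)}$, so your intermediate formula there is an arithmetic slip.)

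The genuinely delicate point is the term you dismiss with ``the $q^{2(h-1)}$ term vanishes mod $q^h$.'' That term contributes $\e\{q^{h-2}J'(z)M'\}=\exp\bigl(\pi i\,q^{h-2}\Tr(J'(z)M')\bigr)$, and $\e\{\cdot\}$ of an integral matrix is $\pm1$, not automatically $1$. It does equal $1$ when $q$ is odd (the standing hypothesis $2\mid J'$ in the definition of $\G_{J',M'}$ forces $\Tr(J'(z)M')$ to be even) and when $q=2$ with $h\ge3$; but for $q=2$, $h=2$ with neither $J'$ nor $M'$ even integral it can be $-1$, the recursion breaks, and the stated formula fails: for $r=d=1$, $J'=M'=(1)$ one has $\G_{1,1}(4)=\sum_{x\bmod 4}\exp(\pi i x^2/4)=2e^{\pi i/4}\ne 2=2^{rdh/2}$. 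The paper's own proof silently drops the same term, so you have faithfully reproduced its argument, gap included; a careful write-up must either establish the recursion only for $h\ge3$ and handle $q=2$, $h=2$ separately (the identity does survive there when at least one of $J'$, $M'$ is even integral, since then $\Tr(J'(z)M')$ is even), or flag the exceptional case explicitly.
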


\begin{proof}  
We have
\begin{align*}
\G_{J',M'}(q^h)
&=\sum_{x\in\Z^{r,d}/q^{h-1}\Z^{r,d}}
\sum_{y\in\Z^{r,d}/q\Z^{r,d}}
\e\{J'(x+q^{h-1}y)M'/q^h\}\\
&=\sum_{x\in\Z^{r,d}/q^{h-1}\Z^{r,d}} \e\{J'(x)M'/q^h\}
\sum_{y\in\Z^{r,d}/q\Z^{r,d}} \e\{M'\,^txJ'y/q\}.
\end{align*}
This last sum on $y$ is a character sum, yielding 0 if $q\nmid x$ and $q^{rd}$ otherwise.
Hence
$\G_{J',M'}(q^h)=q^{rd}\G_{J',M'}(q^{h-2}).$
Repeated applications of this identity yields the result.
\end{proof}

Now we evaluate the Gauss sums $\G_{J',M'}(q)$, separating the cases of $q$ odd and even.

\begin{prop}  Let $q$ be an odd prime.  Suppose that
$J'\in\Z^{r,r}_{\sym}$ and $M'\in\Z^{d,d}_{\sym}$ 
($r,d>0$) so that $J',M'$ are invertible modulo $q$.
 Then
$$\G_{J',M'}(q)
=\left(\frac{\det J}{q}\right)^d \left(\frac{\det M'}{q}\right)^r
\left(\G_1(q)\right)^{rd}$$
where $\G_1(q)$ is the classical Gauss sum.
Thus for $u\in\Z$ with $q\nmid u$ and 
$$J=\begin{pmatrix}I\\&u\end{pmatrix}J'\begin{pmatrix}I\\&u\end{pmatrix},$$
we have
$\G_{J',M'}(q)=\G_{J,M'}(q).$
\end{prop}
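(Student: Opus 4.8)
The plan is to reduce to the one-dimensional case $r=d=1$ by simultaneously diagonalizing $J'$ and $M'$ modulo $q$, and then to recognize what remains as a classical quadratic Gauss sum.

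First I would record that $\G$ is multiplicative in each of its two matrix arguments with respect to orthogonal sums. If $J'=J_1'\perp J_2'$, then writing $x=\binom{x_1}{x_2}$ conformally gives $J'(x)=J_1'(x_1)+J_2'(x_2)$, so $\e\{J'(x)M'/q\}$ factors as a product and $\G_{J_1'\perp J_2',M'}(q)=\G_{J_1',M'}(q)\,\G_{J_2',M'}(q)$; symmetrically, if $M'=M_1'\perp M_2'$, splitting the columns of $x$ and noting that only the diagonal blocks of $J'(x)M'$ contribute to $\Tr$ gives $\G_{J',M_1'\perp M_2'}(q)=\G_{J',M_1'}(q)\,\G_{J',M_2'}(q)$. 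Both are two-line computations using $\Tr(AB)=\Tr(BA)$ and the symmetry of $J',M'$. Since $q$ is odd and $J',M'$ are invertible modulo $q$ (with $J'$ even, as the definition of $\G$ requires), the theory of symmetric bilinear forms over $\F_q$ provides $E\in SL_r(\Z)$ and $G\in SL_d(\Z)$ with ${}^tEJ'E\equiv\langle 2a_1,\dots,2a_r\rangle$ and $GM'\,{}^tG\equiv\langle b_1,\dots,b_d\rangle$ modulo $q$, where $q\nmid a_ib_j$ for all $i,j$: one diagonalizes over $\F_q$, rescales a single basis vector so the transforming matrix has determinant $1$, and lifts to $\Z$ using that reduction $SL_r(\Z)\to SL_r(\Z/q\Z)$ is surjective (likewise for $SL_d$). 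By the $SL$-invariance of $\G$ noted after its definition, followed by the multiplicativity above, $\G_{J',M'}(q)=\prod_{i=1}^{r}\prod_{j=1}^{d}\G_{\langle 2a_i\rangle,\langle b_j\rangle}(q)$.

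It then remains to evaluate the scalar sum $\G_{\langle 2a\rangle,\langle b\rangle}(q)=\sum_{x\bmod q}\e\{2abx^2/q\}=\sum_{x\bmod q}\exp(2\pi i\,ab\,x^2/q)$, which by the standard evaluation of quadratic Gauss sums — count square roots (the number of $x\bmod q$ with $x^2\equiv t$ is $1+\big(\tfrac{t}{q}\big)$), discard the vanishing geometric part, substitute $x\mapsto\overline{ab}\,x$ — equals $\big(\tfrac{ab}{q}\big)\G_1(q)$, with $\G_1(q)$ the classical Gauss sum. Multiplying over all $i,j$, using multiplicativity of the Legendre symbol and that $\det J'\equiv\prod_i 2a_i$, $\det M'\equiv\prod_j b_j\pmod q$ (determinants being preserved under $SL$-congruence, together with the harmless bookkeeping of the factors $\big(\tfrac{2}{q}\big)$ coming from $J'$ being even), reassembles the product into the asserted formula. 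Finally, for $J=\begin{pmatrix}I\\&u\end{pmatrix}J'\begin{pmatrix}I\\&u\end{pmatrix}$ with $q\nmid u$: this $J$ is again even and invertible modulo $q$ with $\det J=u^2\det J'$, so the formula just proved applies to $J$ too, and since $\big(\tfrac{\det J}{q}\big)=\big(\tfrac{\det J'}{q}\big)$ we conclude $\G_{J,M'}(q)=\G_{J',M'}(q)$.

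The only point requiring genuine care is the diagonalization step — arranging the transforming matrices to lie in $SL$ over $\Z$ rather than merely in $GL$ over $\Z_q$ — together with tracking the powers of $2$ through the reassembly; everything else reduces to the classical quadratic Gauss sum evaluation and routine manipulation of Legendre symbols.
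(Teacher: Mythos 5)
Your proof is correct and follows essentially the same route as the paper's: diagonalize $J'$ and $M'$ modulo $q$ by $SL$-congruence into diagonal forms, split the sum into $rd$ one-variable classical quadratic Gauss sums, and evaluate each as $\left(\frac{ab}{q}\right)\G_1(q)$. One remark: both your reassembly and the paper's final line quietly absorb the factor $\left(\frac{2}{q}\right)^{rd}$ coming from $\det J'\equiv 2^{r}\prod_i a_i\ (q)$ when matching the product of Legendre symbols against $\left(\frac{\det J'}{q}\right)^{d}$ --- this is a normalization issue already present in the proposition as stated, not a defect peculiar to your argument.
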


\begin{proof} 
As in the proof of Proposition 5.2, we can find $E\in SL_r(\Z)$ and $E'\in SL_d(\Z)$ so that $^tEJ'E\equiv2\big<1,\ldots,1,\nu\big>\ (q)$ and $^tE'M'E'\equiv \big<1,\ldots,1,\varepsilon\big>\ (q).$  As $ExE'$ varies over $\Z^{r,d}/q\Z^{r,d}$ as $x$ does, we can replace $x$ by $ExE'$ in the sum defining
$\G_{J',M'}(q)$.
Expanding $^tx(\,^tEJ'E)x(\,^tE'M'E')$ 
we find that
\begin{align*}
\G_{J',M'}(q)
&=\left(\G_{1,M'}(q)\right)^{r-1}\cdot\G_{\nu,M'}(q)\\
&=\left(\G_{1}(q)\right)^{(r-1)(d-1)}\left(\G_{\nu}(q)\right)^{d-1}\left(\G_{\varepsilon}(q)\right)^{r-1}\cdot\G_{\nu\varepsilon}(q).
\end{align*}
Since $\G_a(q)=\left(\frac{a}{q}\right)\G_1(q)$, the result follows.
\end{proof}

To help us state the next proposition, we introduce the following terminology.

\smallskip\noindent{\bf Definition.}  Suppose that $J'\in\Z^{r,r}_{\sym}$ ($r>0$) so that $2\nmid\det J'$ and $J'$ is even integral.  As discussed in the proof of Proposition 5.2, we can find $E\in SL_r(\Z)$ so that 
$$^tEJ'E\equiv H\perp\cdots\perp H\perp A'\ (4)$$
where $H=\begin{pmatrix}0&1\\1&0\end{pmatrix}$ and $A'=\pm H$ or
$\pm\begin{pmatrix}2&1\\1&2\end{pmatrix}.$  When $A'=\pm H$ then we say that $J'$ is hyperbolic modulo $4$; note that $J'$ is hyperbolic modulo 4 exactly when $J'$ is even integral and $(-1)^{r/2}\det J'\equiv 1\ (4).$
\smallskip

\begin{prop}  Suppose that $J'\in\Z^{r,r}_{\sym}$ and $M'\in\Z^{d,d}_{\sym}$ ($r,d>0$)
so that $J', M'$ are invertible modulo 2.
\begin{enumerate}
\item[(a)]  Suppose that $J'$ and $M'$ are even integral.  Then $\G_{J',M'}(2)=2^{rd/2}.$
\item[(b)]  Suppose that either $J'$ or $M'$ is not even integral, and that the other is even integral and
hyperbolic modulo 4.  Then Then $\G_{J',M'}(2)=2^{rd/2}.$
\item[(c)]  Suppose that either $J'$ or $M'$ is not even integral, and that the other is 
even integral but not hyperbolic modulo 4.  Then $\G_{J',M'}(2)=(-1)^{rd}2^{rd/2}.$
\item[(d)]  Suppose that neither $J'$ nor $M'$ is even integral; in this case there exist
$E\in SL_r(\Z)$, $E'\in SL_d(\Z)$, $r',d'\in\Z$ so that
$$^tEJ'E\equiv I_{r'}\perp 3I_{r-r'}\ (4) \text{ and }^tE'M'E'\equiv I_{d'}\perp 3I_{d-d'}\ (4).$$
Then $\G_{J',M'}(2)=(2i)^{rd/2}(-i)^{2r'd'-rd'-r'd}.$
\end{enumerate}
Also, for odd $u\in\Z$ and $$J=\begin{pmatrix}I\\&u\end{pmatrix}J'\begin{pmatrix}I\\&u\end{pmatrix},$$ we have
$\G_{J,M'}(2)=\G_{J',M'}(2).$
\end{prop}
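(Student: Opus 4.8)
The plan is to reduce the evaluation of $\G_{J',M'}(2)$ to a product of Gauss sums attached to the atomic blocks $\langle 1\rangle,\langle 3\rangle, H, A$, compute those few by inspection, and reassemble. Three invariance properties do the work, the first two already recorded with the definition of $\G$. First, $\G_{J',M'}(2)$ depends only on $J'$ and $M'$ modulo $4$; it is unchanged when $J'$ is replaced by ${}^tEJ'E$ for any $E\in\Z^{r,r}$ with $\det E$ odd, since then $x\mapsto Ex$ permutes $\Z^{r,d}/2\Z^{r,d}$ (and likewise in the $M'$ slot); and $\G_{J',M'}(2)=\G_{M',J'}(2)$. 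Second --- the observation that makes cases (a)--(c) collapse --- if $J'$ is even integral then each diagonal entry ${}^t(xe_i)J'(xe_i)$ of ${}^txJ'x$ is even, so $\Tr({}^txJ'x\cdot 2N)\equiv 0\ (4)$, whence $\G_{J',M'}(2)$ then depends only on $M'$ modulo $2$ (and symmetrically when $M'$ is even integral). Third, $\G$ is multiplicative for orthogonal sums in each argument: for $J_1\perp J_2$ (or $M_1\perp M_2$) the trace in the exponent splits as a sum, so $\G_{J_1\perp J_2,M'}(2)=\G_{J_1,M'}(2)\,\G_{J_2,M'}(2)$, and similarly in $M'$.

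Next I would invoke the $\Z_2$-classification already used in the proof of Proposition 5.2 (\S93 of \cite{O'M}, or Theorems 8.5 and 8.9 of \cite{Ger}): a Jordan constituent of a $\Z_2$-lattice is either entirely of even type or entirely of odd type. Hence, after reducing modulo $4$ and applying the equivalences above, an invertible-mod-$2$ symmetric $J'$ is congruent to $H^{r/2}$ when $J'$ is even integral and hyperbolic modulo $4$; to $H^{r/2-1}\perp A$ when $J'$ is even integral but not hyperbolic modulo $4$; and to $I_{r'}\perp 3I_{r-r'}$ when $J'$ is not even integral (an odd unimodular $\Z_2$-form diagonalises with odd diagonal entries, which modulo $4$ lie in $\{1,3\}$, and its reduction modulo $2$ is $I_r$). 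The same holds for $M'$. Plugging these normal forms into multiplicativity expresses $\G_{J',M'}(2)$ as a product, over pairs consisting of an atomic block of $J'$ and an atomic block of $M'$, of the elementary sums $\G_{P,R}(2)$ with $P,R\in\{\langle 1\rangle,\langle 3\rangle,H,A\}$; moreover, whenever one of $J',M'$ is even integral the second invariance lets the atoms on the other side be taken to be $\langle 1\rangle$ or $H$.

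The remaining input is the evaluation of the finitely many base cases $\G_{P,R}(2)$, each a sum over $(\Z/2)^{rd}$ of a fourth root of unity, computed directly: for instance $\G_{\langle 1\rangle,\langle 1\rangle}(2)=\G_{\langle 3\rangle,\langle 3\rangle}(2)=1+i$ and $\G_{\langle 1\rangle,\langle 3\rangle}(2)=1-i$; $\G_{H,\langle a\rangle}(2)=2$ and $\G_{A,\langle a\rangle}(2)=-2$ for odd $a$; and $\G_{H,H}(2)=\G_{H,A}(2)=\G_{A,A}(2)=4$ (the last two because ${}^txHx$ and ${}^txAx$ are even integral, so the sums do not see the $2I$ relating $A$ and $H$). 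Multiplying these out according to the atomic decomposition and simplifying with $(1+i)(1-i)=2$ and $1-i=-i(1+i)$ collapses the product: in cases (a)--(c) one even-integral factor forces the answer to be a power of $2$ times a sign determined by how many $A$-blocks are paired against odd-type blocks, and in case (d) the product takes the shape $(1+i)^P(1-i)^{N}$ with $P+N=rd$, which rewrites as $(2i)^{rd/2}(-i)^{N}$ where $N$ counts the mixed atomic pairs and equals $rd'+r'd-2r'd'$ (up to the sign of the exponent).

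Finally, the concluding clause $\G_{J,M'}(2)=\G_{J',M'}(2)$ for $J=DJ'D$ with $D=\langle 1,\dots,1,u\rangle$ and $u$ odd is the case $E=D$ of the first invariance, since $\det D=u$ is odd. I expect the only genuine difficulty to be this last reassembly step: carefully tracking the eighth roots of unity through the atomic product and collapsing them into the closed forms --- in particular fixing the sign when an $A$-block meets an odd block in case (c) and the precise power of $-i$ in case (d) --- is where all the care is required, while (a), (b) and the invariance clause follow quickly once the three invariances and multiplicativity are in place.
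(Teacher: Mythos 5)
Your proposal is correct and follows essentially the same route as the paper: reduce $J'$ and $M'$ to their mod-$4$ atomic normal forms ($\langle 1\rangle$, $\langle 3\rangle$, $H$, $A$), split the character sum multiplicatively over pairs of atoms, and evaluate the handful of elementary sums over $\Z/2\Z$, which is exactly what the paper's proof does before asserting that the proposition follows. The sign concerns you flag in the final reassembly are warranted rather than a defect of your method --- carrying the product out gives $(-1)^{d}2^{rd/2}$ in case (c) with $d$ the rank of the non--even-integral factor (the even-integral one has even rank, so $(-1)^{rd}=1$ as printed), and exponent $rd'+r'd-2r'd'$ on $-i$ in case (d) --- so the discrepancies lie in the stated formulas, not in your argument.
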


\begin{proof}
As we saw in the proof of Proposition 5.2, we can find $E\in SL_r(\Z)$ 
so that when $J'$ is even integral we have
$$^tEJ'E\equiv H\perp\cdots\perp H\perp A'\ (4)$$ with $A'=\pm H$ or $\pm\begin{pmatrix}2&1\\1&2\end{pmatrix}$, and when $J'$ is not even integral we have
$^tEJ'E\equiv I_{r'}\perp 3I_{r-r'}\ (4)$ for some $r'$.  Similarly, we can find $E'\in SL_d(\Z)$ so that when $M'$ is even integral we have
$$^tE'M'E'\equiv H\perp\cdots\perp H\perp A''\ (4)$$ with $A''=\pm H$ or $\pm\begin{pmatrix}2&1\\1&2\end{pmatrix}$, and when $M'$ is not even integral we have
$^tE'M'E'\equiv I_{d'}\perp 3I_{d-d'}\ (4)$ for some $d'$.
In the sum defining $\G_{J',M'}(2)$, we can replace $x\in\Z^{r,d}/2\Z^{r,d}$ by $ExE'$, and then
 $\G_{J',M'}(2)$ decomposes as a product of sums over $2\times 2$ or $2\times 1$ or $1\times 2$ or $1\times 1$ matrices modulo $2$.

For $A'=\begin{pmatrix}2a'&b'\\b'&2c'\end{pmatrix}$, $A''=\begin{pmatrix}2a''&b''\\b''&2c''\end{pmatrix}$
with $b',b''$ odd, we have
\begin{align*}
\sum_{x\in\Z^{2,2}/2\Z^{2,2}} \e\{A'(x)A''/2\}
&=2\sum_{u,u'\in\Z/2\Z} \e\{uu'b'b''\}
=4.
\end{align*}
With $A'=\pm H$,
$A''=\pm\begin{pmatrix}2&1\\1&2\end{pmatrix}$ and $\varepsilon$ odd,
we have
$$\sum_{x\in\Z^{2,1}} \e\{A'(x)\varepsilon/2\}=2
\text{ and }
\sum_{x\in\Z^{2,1}} \e\{A''(x)\varepsilon/2\}=-2.$$
Finally, with $\nu\varepsilon$ odd, we have
$$\sum_{x\in\Z/2\Z} \e\{\nu\varepsilon x^2/2\}
=\begin{cases}
1+i&\text{if $\nu\varepsilon\equiv 1\ (4)$,}\\
1-i&\text{if $\nu\varepsilon\equiv -1\ (4)$.}
\end{cases}
$$
From this the proposition follows.
\end{proof}

\bigskip

\section{Proof of Theorem 1.1}
\smallskip

We have a dimension $2k$ $\Z$-lattice $L$ equipped with a positive definite, even integral quadratic form represented by $Q\in\Z^{2k,2k}_{\sym}$.
We let $M\in\Z^{n,n}_{\sym}$ vary so that 
$\{\E_{\gamma_{_M}}\}$ is a basis for the space of Siegel Eisenstein series
of degree $n$, weight $k$, level $\stufe$, and character $\chi_L$
(where $\chi_L$ is the character associated to $\theta^{(n)}(L;\tau)$,
as defined in Section 2).
By Proposition 4.2, we can assume that 
each
$\gamma_{_M}=\begin{pmatrix}I&0\\M&I\end{pmatrix}$ is a reduced representative modulo $\stufe/2^{\ord_2(\stufe)}$ and a partially reduced representative modulo $2^{\ord_2(\stufe)}$.
From \cite{S35}, we know that for some $a'(L,M)$
we have
$$\theta^{(n)}(\gen L;\tau)=\sum_M a'(L,M)\E_{\gamma_{_M}}.$$
For $\E_{\gamma_{_M}}$ and $\E_{\gamma_{_N}}$ in the basis for Siegel Eisenstein series, Proposition 3.1 gives us
$$\lim_{\tau\to i\infty}\E_{\gamma_{_N}}(\tau)|\gamma_{_M}^{-1}
=\begin{cases}2&\text{if $N=M$ and $\stufe\le 2$,}\\
1&\text{if $N=M$ and $\stufe>2$,}\\
0&\text{otherwise,}\end{cases}$$
and Proposition 5.1 gives us
$$\lim_{\tau\to i\infty}\theta^{(n)}(L;\tau)|\gamma_{_M}^{-1}=
\prod_{q|\stufe}a_q(L,M)$$
where $a_q(L,M)$ is defined in Proposition 5.1.

Fix a prime $q|\stufe$ with $q^e\parallel\stufe$.
By Proposition 5.2, there is some $G\in SL_m(\Z)$ so that 
$$^tGQG\equiv J_0\perp qJ_1\perp\cdots\perp q^eJ_e\ (q^{e+2})$$
where for $0\le c\le e$, there is some $r_c$ so that  $J_c$ is $r_c\times r_c$, integral and symmetric, with $q\nmid\det J_c$ when $r_c>0$.
Also, by our choices of $M$, 
$$M\equiv M_0\perp qM_1\perp\cdots\perp q^eM_e\ (q^e)$$
where for $0\le j<e$, there is some $d_j$ so that
$M_j$ is $d_j\times d_j$, integral and symmetric, with $q\nmid M_j$ when $d_j>0$.
By Propositions 5.2 and 5.3, $a_q(L,M)$ is determined by $\G_{J_c',M_j}(q)$
($0\le j<c\le e$) where 
$J_c'=\begin{pmatrix}I\\&u_c\end{pmatrix}J_c\begin{pmatrix}I\\&u_c\end{pmatrix}$
for some $u_c$ with $q\nmid u_c$.  By Propositions 5.4 and 5.5, we have
$\G_{J'_c,M_j}(q)=\G_{J_c,M_j}(q)$, and when $q$ is odd, $\G_{J_c,M_j}(q)$ is determined by the dimensions and determinants of $J_c$ and $M_j$.

Now consider the case that $q=2$.  A priori, by Proposition 5.5, $\G_{J_c,M_j}(2)$ is determined by the structures of $J_c$ and $M_j$ modulo 4, yet we only know the structure of $M_{e-1}$ modulo 2, and we do not even know the structure of $M_e$ modulo 2.  However, the formula for $a_q(L,M)$ only involves the Gauss sums
$\G_{J_c,M_j}(2)$ for $0\le j<c\le e$, so we only need to ascertain that $\G_{J_e,M_{e-1}}(2)$ is well-determined by $M_{e-1}$ modulo 2 in the case that $r_e,d_{e-1}>0$.  In the case that $r_e,d_{e-1}>0$, we know from Proposition 5.2 that $J_e$ is even integral, and so by Proposition 5.5,
the value of $\G_{J_e,M_{e-1}}(2)$ is determined by whether $J_e$ is hyperbolic, and whether $M_{e-1}$ is even integral (which can be discerned by $M_{e-1}$ modulo 2).

Consequently Propositions 5.1--5.5 show that $\lim_{\tau\to i\infty}\theta^{(n)}(L;\tau)|\gamma_{_M}^{-1}$ is determined by $M$ and the local structure of $L$ at each prime dividing $\stufe$.  Hence
$$\lim_{\tau\to i\infty}\theta^{(n)}(\gen L;\tau)|\gamma_{_M}^{-1}
=\lim_{\tau\to i\infty}\theta^{(n)}(L;\tau)=\kappa\prod_{q|\stufe} a_q(L,M)$$
where $\kappa=1$ if $\stufe> 2$ and $\kappa=1/2$ otherwise.  Also, Propositions 5.1--5.5 give us the exact value of $a_q(L,M)$ for each prime $q|\stufe$.
Note that since $a'(L,M)\not=0$ for those $M$ in the theorem, we conclude that $\E_{\gamma_{_M}}\not=0$.

\bigskip

\end{document}